\definecolor{darkgreen}{rgb}{0,0.5,0}
\newcommand{\NN}{\mathbf{N}} 
\newcommand{\ZZ}{\mathbf{Z}} 
\newcommand{\QQ}{\mathbf{Q}} 
\newcommand{\FF}{\mathbf{F}} 
\newcommand{\PP}{\mathbf{P}} 
\renewcommand{\P}{\mathbf{P}} 
\providecommand{\Hrig}{H_{\text{rig}}} 	      	
\providecommand{\BigOh}{O}          		
\providecommand{\SoftOh}{\widetilde{O}} 		
\DeclareMathOperator{\GL}{GL}
\DeclareMathOperator{\Frob}{F}          
\DeclareMathOperator{\Gal}{Gal} 	
\DeclareMathOperator{\ord}{ord} 	
\newtheorem{thm}{Theorem}[section]
\newtheorem{prop}[thm]{Proposition}
\newtheorem{defn}[thm]{Definition}
\newtheorem{exmp}[thm]{Example}
\newtheorem{rem}[thm]{Remark}
\newtheorem{assump}[thm]{Assumption}
\newtheorem{alg}[thm]{Algorithm}
\renewcommand*{\backref}[1]{}
\renewcommand*{\backrefalt}[4]{%
  \ifcase #1 %
    \relax
  \or
    $\uparrow$#2.%
  \else
    $\uparrow$#2.%
  \fi%
}
\newlength{\proofmargin}
\title{Explicit Coleman integration for curves}
\author{Jennifer S. Balakrishnan}
\address{Jennifer S. Balakrishnan, Department of Mathematics and Statistics, Boston University, 111 Cummington Mall, Boston, MA 02215, USA}
\email{jbala@bu.edu}
\author{Jan Tuitman}
\address{Jan Tuitman, KU Leuven,
         Departement Wiskunde,
         Celestijnenlaan 200B,
         3001 Leuven,
         Belgium}
\email{jan\_tuitman@hotmail.com}
\subjclass[2010]{11S80 (primary), 11Y35, 11Y50 (secondary)}
\begin{document}
\date{\today}

\begin{abstract} The Coleman integral is a $p$-adic line integral 
that plays a key role in computing several important 
invariants in arithmetic geometry. We give an algorithm for explicit Coleman 
integration on curves, using the algorithms of the second author~\cite{tuitman:p1, tuitman:pc-general} 
 to compute the action of Frobenius on $p$-adic cohomology. 
We present a collection of examples computed with our implementation. This includes integrals on a genus 55 curve, where other methods do not currently seem practical.
\end{abstract}

\maketitle

\section{Introduction}
In a series of papers in the 1980s, Coleman formulated a $p$-adic theory of line integration on curves and higher-dimensional varieties with good reduction at $p$
and gave numerous applications in arithmetic geometry. This includes the computation of $p$-adic polylogarithms~\cite{coleman:dilogarithms}, 
torsion points on Jacobians of curves~\cite{coleman:torsion}, rational points on certain curves with small Mordell-Weil rank~\cite{coleman:chabauty}, 
$p$-adic heights on curves (joint with Gross)~\cite{coleman-gross}, and $p$-adic regulators in $K$-theory (joint with de Shalit)~\cite{coleman-deshalit}. 
In \cite{coleman-deshalit}, Coleman and de Shalit  also introduced a theory of {iterated} $p$-adic integration on curves, which plays an important role in Kim's 
nonabelian Chabauty program \cite{kim:chabauty} to compute rational points on curves.

Besser and de Jeu~\cite{besser-dejeu} gave the first algorithm for explicit computation of these integrals---now known as \emph{Coleman integrals}---in the case of iterated 
integrals on $\P^1\setminus\{0,1,\infty\}$. These integrals compute $p$-adic polylogarithms, which are conjecturally related to special values of $p$-adic $L$-functions. 
Balakrishnan, Bradshaw and Kedlaya~\cite{bbk} gave an algorithm to compute single 
Coleman integrals on odd degree models of hyperelliptic curves, which was further generalized to iterated Coleman integrals on arbitrary
hyperelliptic curves in~\cite{balakrishnan:iterated, balakrishnan:even}. These algorithms all rely on an algorithm for computing the action of Frobenius on 
$p$-adic cohomology to realize Dwork's principle of \emph{analytic continuation along Frobenius}. In the case of odd degree hyperelliptic 
curves, this is achieved by Kedlaya's algorithm~\cite{kedlaya:mw}.

Because of all of the applications mentioned above, it is of interest to develop practical 
algorithms to carry out Coleman integration on \emph{any} smooth curve. 
In the present work, we do this by building on work of 
Tuitman~\cite{tuitman:p1, tuitman:pc-general}, which generalizes Kedlaya's algorithm to this setting. We give algorithms to 
compute single Coleman integrals on curves and develop the precision bounds necessary to obtain provably
correct results. Moreover, we provide a complete implementation~\cite{colemangit} of our algorithms in the computer algebra system \texttt{Magma}~\cite{magma} and present a selection of examples, including the computation of  torsion points on Jacobians and carrying out the Chabauty--Coleman method for finding rational points on curves.  We also compare our algorithms to other leading techniques. We present a selection of examples computed using our algorithm, including integrals on a genus 55 curve, where other techniques do not currently seem practical. Our computation shows that the Jacobian of this curve has positive Mordell--Weil rank. The case of iterated Coleman integrals will be discussed in a subsequent paper. 

The structure of the paper is as follows: 
First, in Section \ref{sec:padiccoh} we recall what we need from the theory of $p$-adic cohomology and the algorithms from~\cite{tuitman:p1,tuitman:pc-general}. 
In Section~\ref{sec:integrals}, we present our algorithms for Coleman integration. Next, in Section~\ref{sec:precision}, we discuss the precision bounds necessary to obtain provably correct results.  In Section~\ref{sec:complexity}, we carry out a complexity analysis of our algorithm and compare it with other approaches.
Finally, in Section~\ref{sec:examples}, we conclude with a collection of examples computed with our implementation~\cite{colemangit}.

\section{$p$-adic cohomology}\label{sec:padiccoh}

Let $X$ be a nonsingular projective geometrically irreducible curve of genus~$g$ over $\QQ$ given by a (possibly singular) 
plane model $Q(x,y)=0$ with $Q(x,y) \in \ZZ[x,y]$ a polynomial that is irreducible and monic in $y$. Recall that such a model 
can easily be obtained from other representations of $X$, e.g. by computing (a defining equation of) its function field. 

Let $d_x$ and $d_y$ denote the degrees of the morphisms $x$ and $y$, respectively, from $X$ to the projective line. Note
that these correspond to the degrees of $Q$ in the variables $y$ and $x$, respectively. For the performance of our algorithms
it will be best to first take $d_x$ as small as possible (ideally equal to the gonality of the curve) 
and then $d_y$ as small as possible for this value of $d_x$.

\begin{defn} \label{defn:Delta}
Let $\Delta(x) \in \ZZ[x]$ denote the discriminant of $Q$ with respect to the variable $y$.
Moreover, define $r(x) \in \ZZ[x]$ to be the squarefree polynomial with the same zeros as $\Delta(x)$,
in other words, $r=\Delta/(\gcd(\Delta,\frac{d\Delta}{dx}))$.
\end{defn}

\begin{defn}\label{defn:intbases} Let $W^0 \in \GL_{d_x}(\QQ[x,1/r])$ and
$W^{\infty} \in \GL_{d_x}(\QQ[x,1/x,1/r])$ denote matrices such that, if we denote 
\[ b^0_j = \sum_{i=0}^{d_x-1} W^0_{i+1,j+1} y^i \; \; \; \; \mbox{ and } \; \; \; \; b^{\infty}_j = \sum_{i=0}^{d_x-1} W^{\infty}_{i+1,j+1} y^i \] 
for all $0 \leq j \leq d_x-1$, then
\begin{enumerate}
\item $[b^{0 \;}_0,\ldots,b^{0 \;}_{d_x-1}]$ is an integral basis for $\QQ(X)$ over $\QQ[x]$,
\item $[b^{\infty}_0,\ldots,b^{\infty}_{d_x-1}]$ is an integral basis for $\QQ(X)$ over $\QQ[1/x]$,
\end{enumerate}
where $\QQ(X)$ denotes the function field of $X$. Moreover, let $W \in \GL_{d_x}(\QQ[x,1/x])$ denote
the change of basis matrix $W=(W^0)^{-1} W^{\infty}$.
\end{defn}

There are good algorithms available to compute such matrices, e.g. \cite{hess,bauch}. 

\begin{rem}We assume that $X$ is a curve over $\QQ$ since it is more delicate to compute integral bases in function fields over a $p$-adic
field, both in practice and in theory (to finite precision everything is smooth).\end{rem}

\begin{exmp}Let $X/\QQ$ be an odd degree monic hyperelliptic curve of genus $g$ given by the plane model $$Q(x,y) = y^2 - f(x)=0.$$
We have that $$r(x) = f(x)$$ and
$$
W^0 = \begin{pmatrix}
      1  & 0 \\
      0  & 1
      \end{pmatrix},
\; \; \; \; \; \;
W^{\infty} = \begin{pmatrix}
             1 & 0 \\
             0 & 1/x^{g+1}
             \end{pmatrix}.
$$ 
This means that $b^0 = [1, y]$ and $b^{\infty} = [1,y/x^{g+1}]$ are integral bases for the function field of $X$ over 
$\QQ[x]$ and $\QQ[{1/x}]$, respectively.
\end{exmp}

\begin{defn}
We say that the triple $(Q,W^0,W^{\infty})$ has good reduction at a prime number~$p$ if the conditions
below (taken from \cite[Assumption 1]{tuitman:pc-general}) are satisfied. \end{defn}

\begin{assump}[{\cite[Assumption 1]{tuitman:pc-general}}]\label{tuitman1} \mbox{ }
\begin{enumerate}
\item The discriminant of $r(x)$ is contained in $\ZZ_p^{\times}$.
\item If we denote 
$b^0_j = \sum_{i=0}^{d_x-1} W^0_{i+1,j+1} y^i$ and 
$b^{\infty}_j = \sum_{i=0}^{d_x-1} W^{\infty}_{i+1,j+1} y^i$ for all $0 \leq j \leq d_x-1$, and if we 
let $\FF_p(x,y)$ be the field of fractions of $\FF_p[x,y]/(Q)$, then:
\begin{enumerate}
\item The reduction modulo~$p$ of $[b^{0 \;}_0,\ldots,b^{0 \;}_{d_x-1}]$ is an integral basis for $\FF_p(x,y)$ over $\FF_p[x]$.
\item The reduction modulo~$p$ of $[b^{\infty}_0,\ldots,b^{\infty}_{d_x-1}]$ is an integral basis for $\FF_p(x,y)$ over $\FF_p[1/x]$.
\end{enumerate}
\item $W^0 \in \GL_{d_x}(\ZZ_p[x,1/r])$ and $W^{\infty} \in \GL_{d_x}(\ZZ_p[x,1/x,1/r])$.
\item Denote:
\begin{align*}
\mathcal{R}^0        &= \ZZ_p[x]b^{0}_0 \; \; \; \; \; +\ldots+\ZZ_p[x]b^{0}_{d_x-1}, \\
\mathcal{R}^{\infty} &= \ZZ_p[1/x]b^{\infty}_0+\ldots+\ZZ_p[1/x]b^{\infty}_{d_x-1}.
\end{align*}
For a ring $R$, let $R_{\textrm{red}}$ denote the reduced ring obtained by quotienting out by the nilradical. Then the discriminants of the finite $\ZZ_p$-algebras $(\mathcal{R}^0/(r(x)))_{\textrm{red}}$ and
$(\mathcal{R}^{\infty}/(1/x))_{\textrm{red}}$ are contained in $\ZZ_p^{\times}$.\end{enumerate}
\end{assump}

\begin{rem} 
These conditions imply that the curve $X$ has good reduction at~$p$ but are stronger. 
Note that any triple $(Q,W^0,W^{\infty})$ has good reduction at all but finitely many
prime numbers~$p$. 
\end{rem}

From now on, we fix a prime~$p$ and a triple $(Q,W^0,W^{\infty})$ which has good reduction at~$p$. 
In \cite[Proposition 2.3]{tuitman:pc-general}) it is explained how one can associate to this data
a smooth curve $\mathcal{X}$ over $\ZZ_p$ such that $\mathcal{X}\otimes \QQ_p \cong X \otimes \QQ_p$. 
Let $X^{an}$ denote the rigid analytic space over $\QQ_p$ which is the generic fibre of $\mathcal{X}$.
There is a specialization map from $X^{an}$ to the reduction of $X$ modulo~$p$. The fibres of this map 
are called \emph{residue disks}. (For further background on rigid analytic geometry, see 
\cite{fresnel-vanderput}.)

\begin{defn}
We say that a point of $X^{an}$ is
\emph{very infinite} if its $x$-coordinate is $\infty$ and \emph{very bad} if
it is either very infinite or its $x$-coordinate is a zero of $r(x)$. \end{defn}

\begin{rem}From the fact
that $(Q,W^0,W^{\infty})$ has good reduction at~$p$, it follows that a residue disk
contains at most one very bad point and that this point is defined over an unramified
extension of $\QQ_p$.  \end{rem}

For a very
bad point $P$, we will denote the ramification index of the map $x$ by $e_P$.  We let $U$ denote the complement of the very bad points in $X^{an}$. 

\begin{defn}
We say that a residue
disk (as well as any point inside it) is \emph{infinite} or \emph{bad} if it contains 
a very infinite or a very bad point, respectively. A point or residue disk is called \emph{finite} if 
it is not infinite and \emph{good} if it is not bad. \end{defn}

\begin{rem}Note that all infinite points and infinite 
residue disks are bad.\end{rem}

\begin{rem}
If a point is very bad, this can mean one of three things: 
\begin{enumerate}
\item $x(P) = \infty$,
\item the fiber of $X$ above $x(P)$ contains a ramification point,
\item the fiber of $X$ above $x(P)$ contains a point mapping to a singularity of the plane model $Q(x,y)=0$. 
\end{enumerate}
\end{rem}

We now introduce the main rings over which we work. Let $\langle \rangle^\dag$ denote the ring of overconvergent functions obtained by weak completion of the corresponding
polynomial ring. 

\begin{defn} We denote 
\begin{align*}
S &=\ZZ_p \langle x, 1/r \rangle,         &S^{\dag} &=\ZZ_p \langle x, 1/r \rangle^{\dag}, \\
R &= \ZZ_p \langle x, 1/r, y \rangle/(Q), &R^{\dag} &=\ZZ_p \langle x, 1/r, y \rangle^{\dag}/(Q).
\end{align*}
A Frobenius lift  $\Frob_p:R^{\dag} \rightarrow R^{\dag}$ is defined as a continuous ring homomorphism that 
reduces to the $p$-th power Frobenius map modulo $p$. 
\end{defn}

\begin{thm} \label{thm:froblift} There exists a Frobenius lift $\Frob_p: R^{\dag} \rightarrow R^{\dag}$ 
for which $\Frob_p(x)=x^p$. 
\end{thm}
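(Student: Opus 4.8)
The plan is to build $\Frob_p$ in two stages: first define it on the base ring $S^\dag$ by the rule $x\mapsto x^p$, and then extend it over the module-finite extension $S^\dag\hookrightarrow R^\dag$ by sending $y$ to a suitable root of $Q(x^p,T)$. Two facts will do the work: (i) $r(x^p)$ is invertible in $S^\dag$, since $r(x^p)\equiv r(x)^p\pmod p$; and (ii) $\partial Q/\partial y$ is invertible in $R^\dag$, since the discriminant $\Delta$, which shares its zero set with $r$, is invertible in $S^\dag$.

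For the first stage I would set $\Frob_p(x):=x^p$ and only need to produce an image of $1/r$. Because the reduction of $r$ mod $p$ is fixed by the $p$-power Frobenius of $\FF_p[x]$, one may write $r(x^p)=r(x)^p-p\,s(x)$ with $s\in\ZZ[x]$, so that $r(x^p)=r(x)^p\bigl(1-p\,(s/r^p)\bigr)$ with $s/r^p\in S^\dag$. The factor $1-p\,(s/r^p)$ is a unit in $S^\dag$: its inverse is the geometric series $\sum_{k\ge0}p^k(s/r^p)^k$, which converges in $S^\dag$ because, after passing to a sufficiently small strict neighborhood of the closed tube, the factors $p^k$ dominate the growth of the norms of the powers $(s/r^p)^k$. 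Hence $\Frob_p(1/r):=r(x)^{-p}\bigl(1-p\,(s/r^p)\bigr)^{-1}\in S^\dag$ satisfies $r(x^p)\cdot\Frob_p(1/r)=1$, so together with $\Frob_p(x):=x^p$ it determines a continuous ring endomorphism of $S^\dag$; and since $r(x^p)\equiv r(x)^p\pmod p$, this endomorphism reduces to $f\mapsto f^p$ modulo $p$.

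For the second stage, recall that $Q$ is monic of degree $d_x$ in $y$, so $R^\dag$ is free over $S^\dag$ with basis $1,y,\dots,y^{d_x-1}$, and defining $\Frob_p$ on $R^\dag$ compatibly amounts to choosing a root $Y\in R^\dag$ of $Q(x^p,T)$. First I would record that $\partial Q/\partial y\in(R^\dag)^\times$: the Sylvester identity for the resultant in $y$ yields $aQ+b\,\partial Q/\partial y=\pm\Delta$ with $a,b\in\ZZ[x,y]$, which modulo $Q$ becomes $b\cdot\partial Q/\partial y=\pm\Delta$ in $R^\dag$, and $\Delta$ is a unit in $S^\dag$ because, writing $\Delta=r\,h$ with $h\in\ZZ[x]$ all of whose roots are roots of $r$, one has $h\mid r^M$ in $\QQ[x]$ for $M$ large and hence $1/\Delta=(1/r)(1/h)\in S^\dag$. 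Consequently $\tfrac{\partial Q}{\partial T}(x^p,y^p)\equiv(\partial Q/\partial y)^p\pmod p$ is also a unit in $R^\dag$ (again by the geometric-series argument), while $Q(x^p,y^p)\equiv Q(x,y)^p=0\pmod p$. So starting from $Y_0:=y^p$ I would run the Newton iteration $Y_{n+1}:=Y_n-Q(x^p,Y_n)\big/\tfrac{\partial Q}{\partial T}(x^p,Y_n)$; it converges $p$-adically to some $Y$ with $Q(x^p,Y)=0$ and $Y\equiv y^p\pmod p$, and setting $\Frob_p(y):=Y$ gives a continuous ring endomorphism of $R^\dag$ that lifts the $p$-power Frobenius and has $\Frob_p(x)=x^p$.

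I expect the main obstacle to be the very last point: a priori the Newton iteration converges only in the $p$-adic completion of $R^\dag$, and one must show that the limit $Y$ is genuinely overconvergent, i.e.\ that the overconvergence radius of the iterates does not degenerate. This requires weighing the $p$-adic gain at each Newton step against the loss of radius coming both from the substitution $x\mapsto x^p$ and from the auxiliary inverses $\bigl(\tfrac{\partial Q}{\partial T}(x^p,Y_n)\bigr)^{-1}$. I would handle it either by explicit norm estimates along the iteration, or by invoking the standard structure theory of weakly complete finitely generated algebras (Monsky--Washnitzer, van der Put), in which Hensel's lemma and the lifting of \'etale morphisms are available in the overconvergent category --- this being exactly the analysis that underlies the algorithms of~\cite{tuitman:p1,tuitman:pc-general}. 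Everything else reduces to routine geometric-series and resultant computations.
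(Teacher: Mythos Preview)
Your proposal is correct and follows exactly the approach of the paper, which simply cites \cite[Thm.~2.6]{tuitman:pc-general}: that reference constructs $\Frob_p$ by setting $\Frob_p(x)=x^p$, inverting $r(x^p)$ in $S^\dag$ via the geometric series, and then Hensel/Newton lifting from $y^p$ to a root of $Q(x^p,T)$ in $R^\dag$, with the overconvergence of the limit being the only genuinely delicate point --- precisely as you outline and flag.
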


\begin{proof}
See \cite[Thm. 2.6]{tuitman:pc-general}.
\end{proof}

\begin{defn}
For a point $P$ on a smooth curve, we let $\ord_P$ denote the corresponding discrete valuation
on the function field of the curve. In particular, $\ord_0$ and $\ord_{\infty}$ are
the discrete valuations on the rational function field $\QQ(x)$ corresponding to the points $0$ and 
$\infty$ on $\PP^1_{\QQ}$. We extend these definitions to matrices by taking the minimum over 
their entries.
\end{defn}

From the assumption that $(Q,W^0,W^{\infty})$ has good reduction at~$p$, it follows that the rigid
cohomology spaces $\Hrig^1(U \otimes \QQ_p)$ and $\Hrig^1(X \otimes \QQ_p)$ are isomorphic to their 
algebraic de Rham counterparts~\cite{baldachiar}. 

\begin{defn}\label{def:basis}
Let $[\omega_1,\ldots,\omega_{2g}]$ be $p$-adically integral $1$-forms on $U$ such that
\begin{enumerate}
\item $\omega_1,\ldots,\omega_{g \;}$ form a basis for $H^0(X,\Omega^1_X)$,
\item $\omega_1,\ldots,\omega_{2g}$ form a basis for $\Hrig^1(X \otimes \QQ_p)$,
\item $\ord_P(\omega_i) \geq -1$ for all~$i$ at all finite very bad points $P$,
\item $\ord_P(\omega_i) \geq -1+(\ord_0(W)+1)e_P$ for all~$i$ at all very infinite points $P$.
\end{enumerate}
\end{defn}
In~\cite{tuitman:p1,tuitman:pc-general}, it is explained how $1$-forms satisfying properties (2)-(4) can be computed.
The algorithm can be easily adapted so that (1) is satisfied as well, which is the convention we take. 

\begin{defn} \label{defn:frobdecomp}
The $p$-th power Frobenius $\Frob_p$ acts on $\Hrig^1(X \otimes \QQ_p)$, so there exist a
matrix $\Phi \in M_{2g \times 2g}(\QQ_p)$ and functions $f_1,\ldots,f_{2g} \in R^{\dag} \otimes \QQ_p$
such that 
\[
\Frob_p^*(\omega_i) = df_i + \sum_{j=1}^{2g} \Phi_{ij} \omega_j  
\]
for $i=1, \ldots, 2g$.
\end{defn}

Let us briefly recall from \cite{tuitman:p1, tuitman:pc-general} how the matrix~$\Phi$ and the functions~$f_1,\ldots,f_{2g}$ are computed. 

\begin{alg} \mbox{ } \label{alg:pcc}
\begin{enumerate} 
\item \textbf{Compute the Frobenius lift.} Determine $\Frob_p$ as in Theorem~\ref{thm:froblift}, i.e. set $\Frob_p(x)=x^p$ and
determine the elements $\Frob_p(1/r) \in S^{\dag}$ and $\Frob_p(y) \in R^{\dag}$ by Hensel lifting.  
\item \textbf{Finite pole order reduction.} For $i=1,\ldots,2g$, find $f_{i,0} \in R^{\dag} \otimes \QQ_p$ 
such that
\[
\Frob_p^*(\omega_i) = df_{i,0} + G_i \left( \frac{dx}{r(x)} \right),
\]
where $G_i \in R \otimes \QQ_p$ only has poles at very infinite points. 
\item \textbf{Infinite pole order reduction.} For $i=1,\ldots,2g$, find $f_{i,\infty} \in R \otimes \QQ_p$  such that
\[
\Frob_p^*(\omega_i) = df_{i,0} + df_{i,\infty} + H_i \left(\frac{dx}{r(x)}\right),
\]
where $H_i \in R \otimes \QQ_p$ still only has poles at very infinite points $P$ and satisfies
$$\ord_{P}(H_i) \geq (\ord_0(W)-\deg(r)+2)e_P$$ 
at all these points. 
\item \textbf{Final reduction.} For $i=1,\ldots,2g$, find $f_{i,end} \in R \otimes \QQ_p$ 
such that
\[
\Frob_p^*(\omega_i) = df_{i,0} + df_{i,\infty} + df_{i,end} + \sum_{j=1}^{2g} \Phi_{ij} \omega_j, 
\]
where $\Phi \in M_{2g \times 2g}(\QQ_p)$ is the matrix of $\Frob_p^*$ on $\Hrig^1(U \otimes \QQ_p)$ with respect
to the basis $[\omega_1,\ldots,\omega_{2g}]$. \\
\end{enumerate}
\end{alg}

The matrix $\Phi$ and the functions 
$f_i=f_{i,0}+f_{i,\infty}+f_{i,end}$
are exactly what we need from~\cite{tuitman:p1,tuitman:pc-general} to compute Coleman integrals.

\section{Coleman integrals}\label{sec:integrals}

Let $K/\QQ_p$ be a totally ramified extension. Our goal is to compute the Coleman integral 
$\int_{P}^{Q} \omega$ of a $1$-form $\omega \in \Omega^1(U \otimes \QQ_p)$ of the second kind  
between points $P,Q \in X(K)$. 

The Coleman integral satisfies several key properties, which we will use throughout our integration algorithms:
\begin{thm}[Coleman, Coleman--de Shalit]\label{prop_coleman_int} Let $\eta, \xi$ be $1$-forms on a wide open $V$ of $X^{an}$ and $P,Q,R \in V(K)$. Let $a,b \in K$.  The definite Coleman integral has the following properties:
\begin{enumerate}\item Linearity: $\int_P^Q (a\eta + b\xi) = a \int_P^Q \eta + b\int_P^Q \xi.$
\item Additivity in endpoints: $\int_P^Q \xi = \int_P^R \xi + \int_R^Q \xi.$
\item Change of variables: If $V' \subset X'$ is a wide open subspace of a rigid analytic space $X'$ and $\phi: V \rightarrow V'$ a rigid analytic map then $\int_P^Q \phi^* \xi = \int_{\phi(P)}^{\phi(Q)} \xi.$
\item Fundamental theorem of calculus: $\int_P^Q df = f(Q) - f(P)$ for $f$ a rigid analytic function on $V$.
\item Galois equivariance: the integral is compatible with the action of $\Gal(K/\QQ_p)$.
\end{enumerate}
\end{thm}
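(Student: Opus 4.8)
The plan is to recall the construction underlying Coleman's theory of $p$-adic integration and to deduce the four properties from the characterizing features of Coleman primitives. The starting point is that to every $1$-form $\omega$ of the second kind on a wide open $V \subseteq X^{an}$, Coleman's theory attaches a locally analytic primitive $F_\omega$ on $V$ --- a ``Coleman function'' --- satisfying $dF_\omega = \omega$, and this $F_\omega$ is unique up to an additive constant because it is required to be compatible with the Frobenius action on $V$ (Dwork's principle of analytic continuation along Frobenius, which rigidifies the otherwise merely local antiderivatives obtained by term-by-term integration of power series on each residue disk). One then \emph{defines} $\int_P^Q \omega := F_\omega(Q) - F_\omega(P)$, which is independent of the choice of $F_\omega$ since the ambiguous constant cancels in the difference.

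Granting this, properties (2) and (4) are essentially immediate. Additivity in the endpoints is the identity $\big(F_\omega(Q) - F_\omega(R)\big) + \big(F_\omega(R) - F_\omega(P)\big) = F_\omega(Q) - F_\omega(P)$ applied to a single primitive $F_\omega$. For the fundamental theorem of calculus, if $\omega = df$ with $f$ a rigid analytic function on $V$, then $f$ is itself a locally analytic primitive of $\omega$ which is manifestly Frobenius-compatible (the pullback of an honest function is the honest pullback), so by uniqueness $F_\omega = f + c$ for some constant $c$, and $\int_P^Q df = f(Q) - f(P)$.

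Linearity follows because both the operator $d$ and the Frobenius-compatibility condition defining the primitive are $\QQ_p$-linear: $aF_\eta + bF_\xi$ is a Coleman primitive of $a\eta + b\xi$, hence equals $F_{a\eta+b\xi}$ up to a constant, and again the constant cancels upon evaluating the difference at $P$ and $Q$. For the change of variables formula, given a rigid analytic map $\phi\colon V \to V'$ and a $1$-form $\xi$ on $V'$, the function $\phi^* F_\xi = F_\xi \circ \phi$ on $V$ satisfies $d(\phi^* F_\xi) = \phi^*(dF_\xi) = \phi^* \xi$; the content is that $\phi^* F_\xi$ is again a Coleman function, which follows from the functoriality of Coleman's construction --- equivalently, from the fact that the integral is independent of the chosen Frobenius lift, so one may compute it on $V'$ with any lift and then pull back. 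Then $\int_P^Q \phi^*\xi = (\phi^*F_\xi)(Q) - (\phi^*F_\xi)(P) = F_\xi(\phi(Q)) - F_\xi(\phi(P)) = \int_{\phi(P)}^{\phi(Q)} \xi$.

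The genuinely hard part is none of the four bookkeeping identities above but rather the input they rest on: the existence and essential uniqueness of the Coleman primitive $F_\omega$, i.e. that term-by-term integration on individual residue disks can be glued consistently across $V$ by imposing Frobenius equivariance, and that the resulting function is independent of the choice of Frobenius lift. This is precisely the substance of Coleman's original construction and its extension by Coleman and de Shalit, so rather than reprove it I would invoke their results, after which the four properties are formal consequences as sketched above.
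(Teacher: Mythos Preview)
Your proposal is correct and aligns with the paper's approach: the paper's own proof consists solely of citations to \cite{coleman:torsion} for $1$-forms of the second kind and \cite{coleman-deshalit} for general $1$-forms, which is exactly what you do in your final paragraph. Your exposition of why the four properties are formal consequences of the existence and uniqueness of the Coleman primitive is more detailed than the paper provides, but the substance is the same --- invoke Coleman and Coleman--de Shalit for the hard input and observe that the rest follows.
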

\begin{proof} 
\cite{coleman:torsion} for $1$-forms of the second kind and~\cite{coleman-deshalit} for general $1$-forms.
\end{proof}

Let us first explain how we specify a point $P$. Note that giving $(x,y)$-coordinates might not be 
sufficient even for a finite very bad point, since there may be multiple points on 
$X$ lying above a singular point $(x,y)$ of the plane model defined by $Q$. However, a point $P$ is determined 
uniquely by the value of $x$ ($1/x$ if $P$ is infinite) together with the values of the functions $b^0$ 
($b^{\infty}$ if $P$ is infinite).  Note that all of these values are $p$-adically integral.
In our implementation, we therefore specify a point $P$ by storing three values $(\texttt{P`x,P`b,P`inf})$:
\begin{enumerate}
\item \texttt{P`x}: the $x$-coordinate of $P$ ($1/x$ if $x$ is infinite),
\item \texttt{P`b}: the values of the functions $b^0$ ($b^{\infty}$ if $P$ is infinite),
\item \texttt{P`inf}: true or false, depending on whether the point $P$ is infinite or not.
\end{enumerate}

We will often need power series expansions of functions in terms of a \emph{local coordinate} (i.e., a uniformizing parameter) 
$t$ at $P$. This local coordinate should not just 
be a local coordinate at~$P$ on $X \otimes \QQ_p$, but on the model $\mathcal{X}$ over 
$\ZZ_p$ obtained from the triple $(Q,W^0,W^{\infty})$ as in~\cite[Prop. 2.3]{tuitman:pc-general}. Then it follows 
that the reduction modulo~$p$ of $t$ is a local coordinate at the reduction modulo~$p$ of $P$ and that the residue 
disk at $P$ is given by $\lvert t \rvert<1$. In a bad residue disk, we will always expand functions at the very bad
point. Therefore, in the following proposition, we only consider points that are either good or very bad.
\begin{prop} \label{prop:locparam} Let $P \in X(\QQ_p)$ be a point. Assume that $P$ is either good
or very bad. As a local coordinate at $P$, we can take
\begin{equation*}
t = \begin{cases} x-x(P) &\;\textrm{if}\; e_P=1\; (\textrm{or}\; t=1/x \;\textrm{if}\; P \;\textrm{is infinite}),\\
 b^0_i-b^0_i(P) \;\textrm{for some}~i &\;\textrm{otherwise}\; (\textrm{with}\; b^0 \;\textrm{replaced by}\; b^{\infty} \;\textrm{if}\; P \;\textrm{is infinite}).
\end{cases}
\end{equation*}
\end{prop}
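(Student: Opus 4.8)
The plan is to verify that the proposed $t$ is a uniformizer at $P$ on the integral model $\mathcal{X}/\ZZ_p$, not merely on the generic fibre, by splitting into the cases dictated by the value of $e_P$ and whether $P$ is infinite. First I would treat the unramified case $e_P = 1$ (and $P$ finite): here the map $x$ is \'etale at $P$, so $x - x(P)$ is a local coordinate on $X \otimes \QQ_p$; the good-reduction hypothesis on $(Q, W^0, W^\infty)$ is precisely what guarantees that this persists integrally, i.e. that $x - x(P)$ reduces modulo $p$ to a uniformizer at $\overline{P}$, using \cite[Prop. 2.3]{tuitman:pc-general} to identify $\mathcal{X}$ and its smoothness there. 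The infinite analogue replaces $x$ by $1/x$ and is identical once one notes that $1/x$ is the standard coordinate at $\infty$ on $\PP^1$ and that the very infinite points are unramified over it exactly when $e_P = 1$; for the very infinite ramified case one uses $b^\infty_i - b^\infty_i(P)$ as below.

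Next I would handle the ramified case $e_P > 1$. Here $x - x(P)$ has valuation $e_P > 1$ at $P$, so it cannot serve as a uniformizer, and we must pick one of the functions $b^0_i$ (or $b^\infty_i$ in the infinite case). The key point is that $[b^0_0, \dots, b^0_{d_x-1}]$ is by construction an \emph{integral} basis for $\QQ(X)$ over $\QQ[x]$, and more relevantly by \cite[Prop. 2.3]{tuitman:pc-general} it generates the structure sheaf of $\mathcal{X}$ over the $x$-line locally near $P$; since $P$ lies above a zero of $r(x)$ with ramification index $e_P$, and since the special fibre is smooth (good reduction), the local ring $\mathcal{O}_{\mathcal{X}, \overline{P}}$ is a DVR and at least one of the generators $b^0_i$ must have the property that $b^0_i - b^0_i(P)$ generates its maximal ideal — otherwise every $b^0_i - b^0_i(P)$ would lie in the square of the maximal ideal, and together with $x - x(P)$ (which also does, as $e_P \geq 2$) this would contradict that the $b^0_i$ generate the local ring as an algebra over $\ZZ_p[x]_{(x-x(P))}$. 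I would make this precise by a cotangent-space / Nakayama argument on the fibre over $\overline{P}$.

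The main obstacle I expect is the bookkeeping around the \emph{integral} (as opposed to generic-fibre) statement: one must be careful that "local coordinate" here means a uniformizer of the regular local ring $\mathcal{O}_{\mathcal{X}, \overline{P}}$ and that its reduction mod $p$ is a uniformizer at $\overline{P} \in \overline{X}$, so that the residue disk is indeed $|t| < 1$. This requires invoking the precise content of good reduction for the triple (\cite[Assumption 1]{tuitman:pc-general} and \cite[Prop. 2.3]{tuitman:pc-general}): that $\mathcal{X}$ is smooth over $\ZZ_p$ and that the $b^0, b^\infty$ bases describe it explicitly. The arithmetic over ramified extensions — the very bad point may only be defined over an unramified extension of $\QQ_p$, which is harmless — and the need to choose the \emph{correct} index $i$ (the statement only claims existence of such an $i$) are the remaining points to address. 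Once the structure-sheaf description from \cite[Prop. 2.3]{tuitman:pc-general} is in hand, each case reduces to a short commutative-algebra verification, so I would organize the write-up as: (i) recall the integral model and its smoothness; (ii) dispose of $e_P = 1$ via \'etaleness of $x$ (or $1/x$); (iii) for $e_P > 1$, run the Nakayama/cotangent argument to extract a suitable $b^0_i$ (resp. $b^\infty_i$).
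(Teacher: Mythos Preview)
Your proposal is correct and follows essentially the same two-case split as the paper, but the organization differs slightly. The paper first checks that $t$ is a uniformizer on the \emph{generic fibre} $X\otimes\QQ_p$: for $e_P=1$ this is immediate from the definition of $e_P$; for $e_P\geq 2$ it observes that since the $b^0_i$ (resp.\ $b^\infty_i$) form an integral basis, if every $b^0_i - b^0_i(P)$ had order $\geq 2$ then no function regular at $P$ could have order $1$ there, a contradiction. Only then does it invoke good reduction once, uniformly for both cases, by noting that the divisor $\{t=0\}$ on $\mathcal{X}$ is smooth over $\ZZ_p$, which upgrades $t$ to a local coordinate on the integral model. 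Your Nakayama/cotangent argument on $\mathcal{O}_{\mathcal{X},\overline{P}}$ is the same idea carried out directly on the integral model; it is a valid route and arguably more self-contained, but the paper's separation of the generic-fibre step from the integral step makes for a shorter proof and avoids having to unpack the structure-sheaf description of $\mathcal{X}$ in detail.
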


\begin{proof} 
By definition $e_P=\ord_P(x-x(P))$ (or $e_p=\ord_P(1/x$) if $P$ is infinite). So if $e_P=1$ then 
$t=(x-x(P))$ (or $t=1/x$ if $P$ is infinite) is a local coordinate at $P$ on $X \otimes \QQ_p$.
If $e_P \geq 2$, then at least one of the $b^0_i-b^0_i(P)$ (with $b^0$ replaced by $b^{\infty}$ if 
$P$ is infinite) has to be a local coordinate at $P$ on  $X \otimes \QQ_p$, since otherwise there
would be no functions on $X \otimes \QQ_p$ of order~$1$ at~$P$. In both cases, since
$(Q,W^0,W^{\infty})$ has good reduction at~$p$, the divisor defined by $t$ on $\mathcal{X}$ is
smooth over $\ZZ_p$, so that $t$ is also a local coordinate at~$P$ in the stronger sense explained above.
\end{proof}

After choosing a local coordinate~$t$ at~$P$, in our implementation we compute \texttt{xt,bt} where
\begin{enumerate}\item \texttt{xt} is the power series expansion in $t$ of the function $x$ ($1/x$ if $P$ is infinite),
\item \texttt{bt} is the tuple of power series expansions in $t$ of the functions $b^0$ ($b^{\infty}$ if $P$ is infinite).
\end{enumerate}
Note that all of these power series have $p$-adically integral coefficients.
From \texttt{xt,bt} we will be able to determine the power series expansion in $t$ of any function which is regular at $P$.

A $1$-form $\omega \in \Omega^1(U \otimes \QQ_p)$ is of the form $f dx$ with $f \in R \otimes \QQ_p$. We will usually represent it as follows:
\begin{eqnarray*} \label{eqn:omega}
\omega = \sum_{i=0}^{d_x-1} \sum_{j \in J} \frac{f_{ij}(x)}{r(x)^j} b^0_i \frac{dx}{r} 
\end{eqnarray*}
with $f_{ij} \in \QQ_p[x]$ such that  $\deg(f_{ij}) < \deg(r(x))$ for all $i,j$, since $\omega$ needs
to be in this form to start the cohomological reduction procedures outlined in Section~\ref{sec:padiccoh}. \\

We begin by describing the computation of \emph{tiny} integrals. 

\begin{defn}A tiny integral $\int_P^Q \omega$ is a Coleman integral with endpoints $P,Q \in X(\QQ_p)$ that lie in the same residue disk.
\end{defn}

\begin{alg}[Computing the tiny integral $\int_P^Q \omega$]\label{alg:tiny} \mbox{ }
\begin{enumerate} 
\item If the residue disk of $P,Q$ is bad, then find the very bad point $P'\in X(\QQ_p)$, otherwise set $P'=P$.
\item Compute a local coordinate $t$ and the power series expansions \emph{\texttt{xt,bt}} at $P'$. 
\item Integrate using $t$ as coordinate: 
$$\int_P^Q \omega = \int_{t(P)}^{t(Q)} \omega(t).$$ 
The Laurent series expansion $\omega(t)$ can be determined from \emph{\texttt{xt,bt}}. Note that $\omega$ is of the second kind, 
so the coefficient of $t^{-1}dt$ is zero.
\end{enumerate} 
\end{alg}

\begin{rem}
The calculation of tiny integrals does not require 
computing the action of Frobenius on the cohomology space $\Hrig^1(X \otimes \QQ_p)$. 
This can be a useful consistency check for the integration algorithms that follow, which do use the 
computation of the action of Frobenius. 
\end{rem}

\begin{rem}
Note that Algorithm~\ref{alg:tiny} can also be applied for points defined over $\QQ_p(p^{1/e})$ for some positive integer~$e$ (as long as they are within the same residue disk). 
This is useful for applications in bad residue disks, as we will see later (Algorithm \ref{alg:colemani}).
\end{rem}

When $P,Q \in X(\QQ_p)$ do not lie in the same residue disk, this approach breaks down since the Laurent series expansions
do not converge anymore. In this case we will compute the Coleman integrals $ \int_{P}^Q \omega_i$ for $i=1,\ldots,2g$ 
by solving a linear system imposed by the $p$-th power Frobenius map $F_p$. We first assume that the functions $f_1,\ldots,f_{2g}$ from 
Section~\ref{sec:padiccoh} converge at $P,Q$. Note that 
$f_1,\ldots,f_{2g}$ converge at all good points, but only at bad points that are not too close to the corresponding very bad point.
This will be made more precise in the next section.

\begin{alg}[Compute the $\int_P^Q \omega_i$ assuming $f_1,\ldots,f_{2g}$ converge at $P,Q$]\label{alg:colemangoodi} \mbox{ } 
\begin{enumerate}
\item Compute the action of Frobenius on $\Hrig^1(X \otimes \QQ_p)$ using Algorithm \ref{alg:pcc} and store $\Phi$ and $f_1,\ldots,f_{2g}$.
\item Determine the tiny integrals $\int_P^{F_p(P)} \omega_i$ and $\int_{F_p(Q)}^Q \omega_i$ for $i=1,\ldots,2g$ using Algorithm \ref{alg:tiny}.
\item Compute $f_i(P) - f_i(Q)$ for $i = 1, \ldots, 2g$ and use the system of equations  
\begin{equation*}
\sum_{j=1}^{2g} (\Phi-I)_{ij} \left( \int_P^Q \omega_j \right) = f_i(P)-f_i(Q) - \int_P^{F_p(P)} \omega_i - \int_{F_p(Q)}^Q \omega_i
\end{equation*} to solve for all $\int_P^Q \omega_i$, as in \cite[Algorithm 11]{bbk}.
\end{enumerate} 
\end{alg}

\begin{rem}
Note that the matrix $\Phi-I$ is invertible, since the eigenvalues of $\Phi$ are algebraic numbers of complex absolute value $p^{1/2}$. 
\end{rem}

\begin{rem}The algorithm above follows from the first four properties of the Coleman integral in Theorem \ref{prop_coleman_int}, and in particular, change of variables is carried out via Frobenius, which is a rigid analytic map.\end{rem}

\begin{rem}An alternate approach to Algorithm \ref{alg:colemangoodi} that applies outside of the bad residue disks is to compute Teichm\"{u}ller points (fixed points of the Frobenius map) within the residue disks, solve the resulting linear system between Teichm\"{u}ller points, then correct endpoints via tiny integrals.\end{rem}

\begin{rem}Note that Algorithm~\ref{alg:colemangoodi} can also be applied for points defined over $\QQ_p(p^{1/e})$ for some positive integer~$e$. This is useful for applications in bad residue disks, as we will see below in Algorithm \ref{alg:colemani}.\end{rem}

When $P$ or $Q$ are bad points and $f_1,\ldots,f_{2g}$ do not converge there, the idea is simply to find points 
$P',Q'$ in the residue disks of $P$ and $Q$ where these functions do converge, compute the integrals between 
the new points, and correct for the difference with tiny integrals.

\begin{alg}[Computing the $\int_P^Q \omega_i$ in general]\label{alg:colemani} \mbox{ } 
\begin{enumerate}
\item Determine $P',Q'$ in the residue disks of $P,Q$ at which all functions $f_1,\ldots,f_{2g}$ converge. (See Remark \ref{rem:nearbdprec}.)
\item Compute the tiny integrals $\int_P^{P'} \omega_i$ and $\int_{Q'}^Q \omega_i$ for $i=1,\ldots,2g$ using Algorithm~\ref{alg:tiny}.
\item Determine $\int_{P'}^{Q'} \omega_i$ for $i=1,\ldots,2g$ using Algorithm~\ref{alg:colemangoodi}.
\item Compute $$\int_P^Q \omega_i = \int_P^{P'} \omega_i + \int_{P'}^{Q'} \omega_i +\int_{Q'}^Q \omega_i.$$
\end{enumerate} 
\end{alg}

In general, we have to take the points $P',Q'$ to be defined over some (totally ramified) 
extension $K$ of $\QQ_p$ to get far enough away from the very bad point 
in the bad residue disk; see Remark \ref{rem:nearbdprec}. We will always take this extension to be of the form
$\QQ_p(p^{1/e})$ for some positive integer~$e$. Recall that Algorithms~\ref{alg:tiny} and~\ref{alg:colemangoodi} 
can still be applied in this case and that we may take $P' \in X(\QQ_p)$ in Algorithm~\ref{alg:tiny}. Since computing in 
extensions is more expensive, integrals involving bad points are usually the hardest to compute. \\

For more general $1$-forms of the second kind $\omega \in \Omega^1(U \otimes \QQ_p)$, we can now compute the Coleman integrals $\int_{P}^Q \omega$ as follows
from the output of Algorithms~\ref{alg:colemangoodi} and~\ref{alg:colemani}.
\begin{alg}[Computing $\int_P^Q \omega$] \label{alg:colemangood} \mbox{ }
\begin{enumerate}
\item Use Steps (2),(3) and (4) of Algorithm~\ref{alg:pcc} to find $f \in R$ and $c_i \in \QQ_p$ for $i=1,\ldots,2g$ such that $$\omega = df+ \sum_{i=1}^{2g} c_i \omega_i.$$ 
\item Compute $f(Q)-f(P)$ and determine
\[
\int_{P}^{Q} \omega = f(Q)-f(P)  + \sum_{i=1}^{2g} c_i \int_{P}^Q \omega_i. 
\]
\end{enumerate}
\end{alg}

\begin{rem}
Note that we are only considering points $P,Q$ defined over a  totally ramified extension $K$ of $\QQ_p$ because we want the residue field to be 
$\FF_p$ so that we work with a lift of $p$-power Frobenius.  It would be of interest to extend our work to points defined over arbitrary 
finite extensions of $\QQ_p$ as discussed in~\cite[Remark 12]{bbk} and more generally work with a lift of $q$-power Frobenius.
\end{rem}

\section{Precision bounds} \label{sec:precision}

So far we have not paid any attention to the fact that we can only compute to finite $p$-adic and $t$-adic
precision in our algorithms. By \emph{precision} we will always mean \emph{absolute} $p$-adic precision, i.e., 
the valuation of the error term. We extend the $p$-adic valuation and the notion of precision to all 
finite extensions of $\QQ_p$, where they will take non-integer values in general. 

Let us start with tiny integrals.

\begin{prop} \label{prop:prectiny} Let $e$ be a positive integer and $P,Q \in X(\QQ_p(p^{1/e}))$ two points in 
the same residue disk accurate to precision $N$. Let $t$ be a local coordinate (in the sense of 
Proposition~\ref{prop:locparam}) at the point $P'$ from Algorithm~\ref{alg:tiny}. 
Suppose that $\omega = g(t) dt$ is a differential of the second kind with 
$$g(t) = a_{-k} t^{-k} + a_{-k+1} t^{-k+1} + \ldots  \in \ZZ_p[[t]][t^{-1}]$$
for some positive integer $k$. If $g$ is accurate to $p$-adic precision $N$ and truncated modulo $t^l$, then the tiny integral $\int_P^Q \omega$ 
computed as in Algorithm~\ref{alg:tiny} is correct to precision $\min\{\nu_1,\nu_2,\nu_3\}$ where:
\begin{align*}
\nu_1 &= 1/e +  \min_{i \geq l}   \{ i/e-\lfloor \log_p(i+1) \rfloor \}, \\
\nu_2 &= N   +  \min_{0 \leq i \leq l-1}   \{ i/e- \lfloor \log_p(i+1) \rfloor \}, \\
\nu_3 &= N   -  k\max \{ \ord_{p}(t(P)),\ord_p(t(Q))\}-\lfloor \log_p(k-1) \rfloor. 
\end{align*}
\end{prop}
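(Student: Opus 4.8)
The plan is to track the $p$-adic error in the three distinct operations that go into Algorithm~\ref{alg:tiny}: the truncation of the series $g(t)$ modulo $t^l$, the loss of precision inherent in formal integration, and the evaluation at the endpoints $t(P),t(Q)$. Write
\[
\int_P^Q \omega = \int_{t(P)}^{t(Q)} g(t)\,dt = \left[ G(t) \right]_{t(P)}^{t(Q)}, \qquad
G(t) = \sum_{i \geq -k,\, i \neq -1} \frac{a_i}{i+1} t^{i+1},
\]
where the $t^{-1}$-term is absent because $\omega$ is of the second kind. First I would isolate the contribution of the tail $\sum_{i \geq l} \frac{a_i}{i+1} t^{i+1}$ that is discarded when $g$ is truncated modulo $t^l$: since the $a_i$ are in $\ZZ_p$ and $\ord_p(t(P)), \ord_p(t(Q)) \geq 1/e$ (both points lie in the residue disk $|t| < 1$, and over $\QQ_p(p^{1/e})$ the smallest positive valuation is $1/e$), the term of index $i$ contributes at least $(i+1)/e - \ord_p(i+1)$, and $\ord_p(i+1) \leq \lfloor \log_p(i+1) \rfloor$. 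Minimising over $i \geq l$ gives $\nu_1$.

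Next I would handle the head $\sum_{-k \leq i \leq l-1} \frac{a_i}{i+1} t^{i+1}$, the part actually summed. For $i \geq 0$ the coefficient $a_i$ is only known to precision $N$, so after dividing by $i+1$ and multiplying by $t^{i+1}$ (valuation $\geq (i+1)/e > i/e$) the error is at least $N + i/e - \ord_p(i+1) \geq N + i/e - \lfloor \log_p(i+1) \rfloor$; minimising over $0 \leq i \leq l-1$ gives $\nu_2$ (the bound is vacuous for negative~$i$, where $t(P),t(Q)$ may have negative valuation but that is absorbed into $\nu_3$). For the genuinely negative indices $-k \leq i \leq -2$, the term is $\frac{a_i}{i+1} t^{i+1}$ with $i+1$ ranging over $-k+1,\ldots,-1$; here $t(P)^{i+1}$ can have negative valuation as large as $-k \max\{\ord_p t(P), \ord_p t(Q)\}$ in absolute value (taking the worst endpoint and the most negative exponent $i+1 = -(k-1)$), the known precision on $a_i$ is $N$, and the division by $i+1$ loses at most $\ord_p(i+1) \leq \lfloor \log_p(k-1)\rfloor$; this yields $\nu_3$. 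The overall precision is the minimum of the three, as claimed.

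I do not expect a serious obstacle here — the argument is a bookkeeping exercise — but the step that needs the most care is pinning down exactly which exponent and which endpoint realise the worst case in $\nu_3$, and checking that the $\lfloor \log_p(\,\cdot\,)\rfloor$ bounds on $\ord_p$ of the denominators $i+1$ are applied with the correct argument ($i+1$ up to $k-1$ in absolute value for the polar part, $i+1 \leq l$ elsewhere). One should also note that the hypothesis $g \in \ZZ_p[[t]][t^{-1}]$ and the integrality of \texttt{xt,bt} from Proposition~\ref{prop:locparam} are what guarantee the polar coefficients $a_{-k},\ldots,a_{-2}$ are themselves in $\ZZ_p$, so that no extra denominator sneaks in before the integration; with that in hand the three estimates are independent and combine directly.
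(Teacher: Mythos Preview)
Your three-way decomposition matches the paper's proof, and your treatment of $\nu_1$ (the truncation tail) is essentially identical to it. For $\nu_2$ and $\nu_3$, however, you have attributed the loss of precision to the wrong source. You track the error coming from the coefficients $a_i$ being known only to precision~$N$, and then multiply by $t(P)^{i+1}/(i+1)$ treated as an exact quantity. The paper instead tracks the error coming from the \emph{endpoints}: $t(P)$ and $t(Q)$ are themselves only accurate to precision~$N$, so $t(P)^{i+1}$ is correct only to precision $N+i/e$ for $i\geq 0$ and to $N+i\,\ord_p(t(P))$ for $i<0$ (using that inversion loses precision equal to twice the valuation). It is this endpoint error, after dividing by $i+1$, that produces exactly the stated $\nu_2$ and $\nu_3$.

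This is a genuine (though easily repaired) gap rather than an alternative route. If you carry your $a_i$-error analysis through honestly you obtain the strictly better bounds $N+(i+1)/e-\lfloor\log_p(i+1)\rfloor$ for the nonnegative range and $N-(k-1)\max\{\ord_p t(P),\ord_p t(Q)\}-\lfloor\log_p(k-1)\rfloor$ for the polar part; this is why you found yourself weakening $(i+1)/e$ to $i/e$ and $-(k-1)$ to $-k$ to match the statement (note your own parenthetical says the most negative exponent is $i+1=-(k-1)$, not $-k$). The point is that the endpoint imprecision is the \emph{dominant} error in these two ranges, and your argument as written never bounds it. Adding the one-line estimate on the precision of $t(P)^{i+1}$ from the imprecision in $t(P)$ fixes the proof and recovers $\nu_2,\nu_3$ on the nose.
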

\begin{proof} Recall from Algorithm~\ref{alg:tiny} that
\[
\int_{P}^Q \omega = \int_{t(P)}^{t(Q)} \omega(t) = \sum_{i=-k}^{\infty} \frac{a_i}{i+1} \left( t(Q)^{i+1} - t(P)^{i+1} \right).
\]
where $a_{-1}=0$ since $\omega$ is of the second kind. Since $P,Q$ both lie in the residue disk given by $\lvert t \rvert <1$,
we have that $\ord_p(t(P)),\ord_p(t(Q)) \geq 1/e$.

First, we bound the error introduced by omitting the terms with $i \geq l$. Note that 
$$\ord_{p}(t(P)^{i+1}),\ord_p(t(Q)^{i+1}) \geq (i+1)/e$$ 
and 
$$\ord_{p}(i+1) \leq \lfloor \log_p(i+1) \rfloor.$$ 
Therefore, the valuation of this error is at least $\nu_1$.

Next, we consider the error coming from terms with $0 \leq i \leq l-1$. Since $t(P),t(Q)$ are accurate to precision $N$ 
and have valuation at least~$1/e$, we have that $t(P)^{i+1},t(Q)^{i+1}$ are correct to precision $N+i/e$. Therefore, the valuation of
this error is at least $\nu_2$.

Finally, we  bound the error coming from terms with $-k \leq i \leq -2$. This time $t(P)^{i+1},t(Q)^{i+1}$ are correct to precision at least
$N+i\ord_{p}(t(P)),N+i\ord_{p}(t(Q))$, respectively (since the loss of precision of an inversion is $2$ times the valuation). Therefore, the valuation 
of the error is at least $\nu_3$ this time.
\end{proof}

\begin{rem} \label{rem:l}
Since we always have that $\nu_2 \leq N$, there is no point in increasing the $t$-adic precision $l$ further if 
$\nu_1 \geq N$ already. Therefore, in our implementation we take $l$ to be minimal such that $\nu_1 \geq N$. 
\end{rem}

To compute integrals that are not tiny, in Algorithm~\ref{alg:colemangoodi} we have to evaluate the functions 
$$f_i=f_{i,0}+f_{i,\infty}+f_{i,end}$$ from Section~\ref{sec:padiccoh} at the endpoints for $i=1,\ldots,2g$. 
Evaluating an element of 
$R^{\dag}~\otimes~\QQ_p$ at a bad point leads to problems with convergence and loss of precision. We first recall 
from~\cite{tuitman:p1,tuitman:pc-general} what we know about the poles of the functions
$f_{i,0},f_{i,\infty},f_{i,end} \in R^{\dag} \otimes \QQ_p$.

The only poles of infinite order are those of the $f_{i,0}$ at the finite very bad points. 
It follows from~\cite[Prop. 2.12, Prop. 3.3, Prop. 3.7]{tuitman:pc-general} that
\begin{equation} \label{eq:f0i} f_{i,0} = \sum_{j=0}^{d_x-1} \sum_{k=1}^{\infty} \frac{c_{ijk}(x)}{r(x)^k} b^0_j,\end{equation}
for all $i$, where the $c_{ijk}$ are elements of $\QQ_p[x]$ of degree smaller than $\deg(r)$ that satisfy
\begin{equation} \label{eq:cijk} \ord_p(c_{ijk}) \geq \lfloor k/p \rfloor +1 - \lfloor \log_p(k e_0) \rfloor\end{equation}
with $e_0 = \max\{e_P \colon P \mbox{ finite very bad point}\}$. 

Similarly, it follows from~\cite[Prop. 2.12, Prop. 3.4, Thm. 3.6]{tuitman:pc-general} that
\begin{equation} \label{eq:finftyi}
f_{i,\infty} = \sum_{j=0}^{d_x-1} \sum_{k=0}^{\kappa_1} c_{ijk} x^k b^0_j = \sum_{j=0}^{d_x-1} \sum_{k=\kappa_2}^{\kappa_3} d_{ijk} x^k b^{\infty}_j
\end{equation}
for all $i$, where the $c_{ijk},d_{ijk}$ are elements of $\QQ_p$ and 
$$\kappa_3 \leq -\min\{p (\ord_0(W)+1) ,(\ord_{\infty}(W^{-1})+1)\},$$ where $W=(W^0)^{-1} W^{\infty}$.
This determines bounds on $\kappa_1,\kappa_2$ as well.

Finally, it follows from~\cite[Thm. 3.6]{tuitman:pc-general} that
\begin{equation} \label{eq:fendi}
f_{i,end} = \sum_{j=0}^{d_x-1} \sum_{k=0}^{\lambda_1} c_{ijk} x^k b^0_j  = \sum_{j=0}^{d_x-1} \sum_{k=\lambda_2}^{\lambda_3} d_{ijk} x^k b^{\infty}_j
\end{equation}
for all $i$, where the $c_{ijk}, d_{ijk}$ are elements of $\QQ_p$ and 
\[
\lambda_3 \leq -(\ord_0(W)+1).
\]
Note that this determines bounds on $\lambda_1,\lambda_2$ as well.

\begin{prop}
On a finite bad residue disk, the functions $f_{i,0}$ converge outside of the 
closed disk defined by $\ord_p(r(x)) \geq 1/p$.
\end{prop}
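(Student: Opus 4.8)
The plan is to estimate, term by term, the $p$-adic valuation of the summands in the expansion~\eqref{eq:f0i} of $f_{i,0}$, using the coefficient bound~\eqref{eq:cijk}, and to show that this valuation tends to $+\infty$ whenever $x$ lies in the stated region. Fix a finite bad residue disk with very bad point $P'$, and work with a local coordinate at $P'$ (in the sense of Proposition~\ref{prop:locparam}). Since $r(x)$ has a simple zero at $x(P')$, the functions $b^0_j$ are regular at $P'$, so it suffices to bound each term $c_{ijk}(x)/r(x)^k$. The hypothesis is that $\ord_p(r(x)) < 1/p$; combined with $\ord_p(r(x)) > 0$ on the residue disk, this gives $\ord_p(1/r(x)^k) = -k\,\ord_p(r(x)) \geq -k/p$ only as a \emph{lower} bound is useless here — rather, we want the \emph{upper} bound $\ord_p(r(x)) < 1/p$ to control the negative contribution, so $-k\,\ord_p(r(x)) > -k/p$ is the wrong direction; instead use that on the locus where convergence is claimed we have $\ord_p(r(x)) \le 1/p$ does not hold, i.e.\ $\ord_p(r(x)) < 1/p$, hence $-k\,\ord_p(r(x)) > -k/p$. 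So the $k$-th term has valuation at least
\[
\ord_p(c_{ijk}) - k\,\ord_p(r(x)) \;\geq\; \lfloor k/p \rfloor + 1 - \lfloor \log_p(k e_0) \rfloor - k\,\ord_p(r(x)).
\]

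Now the key inequality: writing $\ord_p(r(x)) = 1/p - \delta$ for some $\delta > 0$ (this is exactly the condition that $x$ lies outside the closed disk $\ord_p(r(x)) \geq 1/p$), the term $\lfloor k/p\rfloor - k\,\ord_p(r(x)) = \lfloor k/p \rfloor - k/p + k\delta \geq -1 + k\delta$, using $\lfloor k/p \rfloor \geq k/p - 1$. Hence the $k$-th term has valuation at least $k\delta - \lfloor \log_p(k e_0)\rfloor$, which goes to $+\infty$ with $k$ since $\delta > 0$ and the logarithmic term grows more slowly than any linear function of $k$. Therefore the series~\eqref{eq:f0i} converges $p$-adically at every such $x$, uniformly on the complement of the closed disk $\ord_p(r(x)) \geq 1/p$ inside the residue disk. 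I would also note that the $b^0_j$, being regular at $P'$, are bounded on the whole residue disk, so they contribute no obstruction to convergence.

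The main obstacle is making the claim about the region precise: one must be careful that "converges outside the closed disk $\ord_p(r(x)) \geq 1/p$" means convergence for those $p$-adic points $x$ of the residue disk with $\ord_p(r(x)) < 1/p$, and that such points exist in the residue disk (which holds since $r$ has a simple zero there, so $\ord_p(r(x))$ takes all sufficiently small positive values as $x$ ranges over the disk, and in particular values strictly below $1/p$ when $p \geq 2$). The only genuinely delicate point is the interplay between the floor function $\lfloor k/p \rfloor$ and the term $k\,\ord_p(r(x))$: the estimate works precisely because $\lfloor k/p\rfloor$ is within $1$ of $k/p$, so the borderline case $\ord_p(r(x)) = 1/p$ is exactly where the linear-in-$k$ gain disappears and only the divergent $-\lfloor \log_p(ke_0)\rfloor$ survives — which is why the disk $\ord_p(r(x)) \geq 1/p$ must be excluded. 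Everything else is a routine summation estimate.
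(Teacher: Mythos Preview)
Your argument is correct and is exactly the approach the paper has in mind: the paper's own proof is the single line ``This is clear from \eqref{eq:f0i} and \eqref{eq:cijk},'' and you have simply written out the estimate that makes this clear, namely that with $\ord_p(r(x)) = 1/p - \delta$ for $\delta > 0$ the $k$-th term has valuation at least $k\delta - \lfloor \log_p(ke_0)\rfloor \to +\infty$. The only remark is expository: the passage where you debate with yourself whether $-k\,\ord_p(r(x)) > -k/p$ is ``the wrong direction'' should be deleted --- it is the right direction, but by itself not sharp enough, which is precisely why you then introduce the margin $\delta$; just go straight to that.
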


\begin{proof}
This is clear from \eqref{eq:f0i} and \eqref{eq:cijk}. 
\end{proof}

\begin{rem}\label{rem:nearbdprec}
Let $t$ denote a local coordinate at the very bad point of a finite bad residue disk.
Then we have that $\ord_p(r(x)) < 1/p$ is equivalent to the condition $\ord_p(t) < \frac{1}{pe_P}$. 
Consequently, for the functions $f_{i,0}$ to converge at a point $P' \in X(\QQ_p(p^{1/e}))$ 
in the residue disk of $P$, we need to take $e > pe_P$. 
\end{rem}

When $f_1,\ldots,f_{2g}$ do converge at a point $P$, their computed values at this point will
suffer some loss of $p$-adic precision in general. In the next three propositions we quantify 
this precision loss for good, finite bad, and infinite points, respectively.

\begin{prop} \label{prop:precgood}
Suppose that the functions $f_{i,0},f_{i,\infty},f_{i,end}$ are accurate to 
precision~$N$. Moreover, let $e$ be a positive integer and let
$P \in X(\QQ_p(p^{1/e}))$ be a good point that is accurate to precision $N$. 
Then the computed values $f_i(P)$ are correct to precision $N$ as well.
\end{prop}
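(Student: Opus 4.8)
The plan is to track how $p$-adic precision is lost when one evaluates the three pieces $f_{i,0}$, $f_{i,\infty}$, $f_{i,end}$ at a good point $P \in X(\QQ_p(p^{1/e}))$, using the explicit pole structure recorded in \eqref{eq:f0i}--\eqref{eq:fendi}. Since $P$ is good, its $x$-coordinate is not in the closed disk $\ord_p(r(x)) \geq 1/p$; in fact, because $P$ is \emph{finite} and good, $\ord_p(r(x(P))) = 0$, so $1/r(x(P))$ is a $p$-adic unit and evaluation of $1/r(x)^k$ introduces no precision loss at all. The same holds for the powers $x^k$ with $k \geq 0$ appearing in $f_{i,\infty}$ and $f_{i,end}$, and for the values $b^0_j(P)$, which the excerpt has already noted are $p$-adically integral. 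So each monomial term of each $f_{i,\bullet}$, when evaluated at $P$, has valuation at least that of its coefficient.

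First I would handle $f_{i,0}$: from \eqref{eq:f0i} its value at $P$ is $\sum_{j,k} c_{ijk}(x(P)) r(x(P))^{-k} b^0_j(P)$, and since $x(P)$, $1/r(x(P))$, and $b^0_j(P)$ are all $p$-adically integral (the first two being units), each term has valuation $\geq \ord_p(c_{ijk}) \geq 0$ by \eqref{eq:cijk} once $k$ is large (the bound $\lfloor k/p\rfloor + 1 - \lfloor \log_p(ke_0)\rfloor$ is eventually positive and the finitely many small-$k$ terms are integral coefficients anyway). Actually the cleaner point is simply: every coefficient $c_{ijk}$ of $f_{i,0}$ is in $\ZZ_p$ up to the stated error, so the infinite sum converges to something in $\ZZ_p$ and the partial sum we compute agrees with it to precision $N$ provided we have summed enough terms — but the hypothesis already says $f_{i,0}$ is \emph{accurate to precision $N$}, so I only need that \emph{evaluation} at $P$ preserves precision $N$, which follows because we are substituting integral elements into a power series / Laurent polynomial in $1/r$ and $x$ with the relevant quantities being units. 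The same verbatim argument applies to $f_{i,\infty}$ via \eqref{eq:finftyi} and to $f_{i,end}$ via \eqref{eq:fendi}: all exponents of $x$ and $1/r$ that occur are applied to units, the $b^0_j(P)$ are integral, and the coefficients are in $\QQ_p$ with valuation bounded below by the accuracy hypothesis, so no precision is lost beyond $N$. Summing $f_i = f_{i,0} + f_{i,\infty} + f_{i,end}$ then gives $f_i(P)$ correct to precision $N$.

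The only point requiring a little care — and the one I would state explicitly — is why for a \emph{good finite} point we genuinely have $\ord_p(r(x(P))) = 0$ and $\ord_p(x(P)) \geq 0$: goodness rules out bad residue disks (so $x(P)$ does not reduce to a zero of $r$ mod $p$, whence $r(x(P))$ is a unit) and finiteness rules out infinite disks (so $x(P)$ is integral, not just $1/x(P)$). With these two facts in hand the estimate is immediate and no genuine obstacle remains; the content of the proposition is really just the bookkeeping observation that, unlike the bad and infinite cases treated in the following propositions, evaluation at a good point is ``free.''
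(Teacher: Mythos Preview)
Your proposal is correct and follows essentially the same approach as the paper: the paper's proof is the one-line observation that a good point is finite, hence $\ord_p(x(P)) \geq 0$ and $\ord_p(r(x(P)))=0$, so evaluating \eqref{eq:f0i} and the middle expressions of \eqref{eq:finftyi}, \eqref{eq:fendi} loses no precision. The only small thing to tighten is that you treat ``finite'' as a separate hypothesis rather than noting (as the paper does) that goodness already forces finiteness, since every infinite disk is bad.
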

\begin{proof} 
Note that a good point is always finite. Since we have that $\ord_p(x(P)) \geq 0$ and $\ord_p(r(x(P)))=0$,
there is no loss of precision in evaluating~\eqref{eq:f0i} and the expressions in the middle
of~\eqref{eq:finftyi} and~\eqref{eq:fendi}.
\end{proof}

\begin{prop} \label{prop:precfinitebad}
Suppose that the functions $f_{i,0},f_{i,\infty},f_{i,end}$ are accurate to precision $N$. Moreover, 
let $e$ be a positive integer and let $P \in X(\QQ_p(p^{1/e}))$ be a finite bad point that is accurate to precision $N$. 
Let $\epsilon=\ord_p(r(P))$ and suppose that $\epsilon < 1/p$. Define a function $\pi$ on positive integers by
\[
\pi(k) =  \max \{N, \lfloor k/p \rfloor +1 - \lfloor \log_p(k e_0) \rfloor \},
\]
where $e_0 = \max\{e_P \colon P \mbox{ finite bad point }\}$.
Then the computed values $f_i(P)$ are correct to precision
$$\min_{k \in \NN} \{ \pi(k) - k \epsilon \}.$$
\end{prop}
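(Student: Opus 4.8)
The plan is to write $f_i = f_{i,0} + f_{i,\infty} + f_{i,end}$ and to bound, one summand at a time, the $p$-adic precision lost when it is evaluated at $P$, using the explicit descriptions \eqref{eq:f0i}, \eqref{eq:cijk}, \eqref{eq:finftyi}, \eqref{eq:fendi}. The two ``polynomial'' summands $f_{i,\infty}$ and $f_{i,end}$ are the easy part. Representing them, via the middle expressions in \eqref{eq:finftyi} and \eqref{eq:fendi}, as $\QQ_p$-linear combinations of the functions $x^k b^0_j$ with $k \geq 0$ bounded, and using that $P$ is a \emph{finite} point so that $\ord_p(x(P)) \geq 0$ and the values $b^0_j(P)$ are $p$-adically integral and known to precision $N$, one sees immediately that $f_{i,\infty}(P)$ and $f_{i,end}(P)$ are computed with error $O(p^N)$, i.e. to precision $N$. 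Since $\mu_1 \leq 1$ forces $\pi(1) = N$ for $N \geq 1$, we have $\min_{k}\{\pi(k) - k\epsilon\} \leq N - \epsilon < N$, so these two contributions can never spoil the asserted bound, and it remains to analyse $f_{i,0}(P)$.

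For $f_{i,0}$ we use \eqref{eq:f0i}, i.e. $f_{i,0}(P) = \sum_{j=0}^{d_x-1}\sum_{k\geq 1} c_{ijk}(x(P))\, r(x(P))^{-k}\, b^0_j(P)$, together with the coefficient bound \eqref{eq:cijk}, $\ord_p(c_{ijk}) \geq \mu_k := \lfloor k/p\rfloor + 1 - \lfloor\log_p(k e_0)\rfloor$. The key point is that, for every $k$, the value $c_{ijk}(x(P))$ is known to \emph{effective} precision $\pi(k) = \max\{N,\mu_k\}$: the polynomial $c_{ijk}$ is computed modulo $p^N$ and $x(P)$ is $p$-integral, which already gives error $O(p^N)$, while \eqref{eq:cijk} guarantees a priori that $c_{ijk}(x(P)) \in p^{\mu_k}\ZZ_p$, which upgrades the error to $O(p^{\mu_k})$ exactly in the range $\mu_k \geq N$ — there the coefficients of $c_{ijk}$ already vanish modulo $p^N$, so these are precisely the terms that are dropped in the finite truncation of the series that one actually computes. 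Dividing the $k$-th term by $r(x(P))^k$, of valuation $k\epsilon$, turns this into an $O(p^{\pi(k)-k\epsilon})$ error, and multiplying by the $p$-integral, precision-$N$ quantity $b^0_j(P)$ does not degrade it. Summing over $j$ and $k$: because $\mu_k$ grows like $k/p$ and $\epsilon < 1/p$, the numbers $\pi(k) - k\epsilon$ tend to $+\infty$, so the series for $f_{i,0}(P)$ converges (in agreement with the convergence of $f_{i,0}$ on this disk and with Remark~\ref{rem:nearbdprec} above) and the minimum $\min_k\{\pi(k) - k\epsilon\}$ is attained at some finite $k$; the total error is then $O(p^{\min_k\{\pi(k)-k\epsilon\}})$. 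Adding the precision-$N$ quantities $f_{i,\infty}(P)$ and $f_{i,end}(P)$ leaves this unchanged, which is the claim.

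The step that genuinely needs care, rather than a one-line citation, is the division of the retained terms by $r(x(P))^k$: one must check that the only precision loss is the valuation shift by $k\epsilon$, in particular that forming $r(x(P))^{-k}$ from the approximation of $r(x(P))$ (which is itself only known to precision $N$) does not eat further into the precision once the factor $c_{ijk}(x(P)) \in p^{\mu_k}\ZZ_p$ is taken into account — this is where the lower bound \eqref{eq:cijk} on $\mu_k$ is used, and a few small $k$ (which can occur only when some very bad point has a ramification index divisible by a high power of $p$) may have to be examined directly. The auxiliary facts — the growth estimate $\mu_k = \lfloor k/p\rfloor + 1 - \lfloor\log_p(ke_0)\rfloor \gtrsim k/p$ needed both for convergence and for the attainment of the minimum, and the stability of precision under multiplication by $p$-integral quantities — are routine.
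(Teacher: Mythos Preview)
Your proposal is correct and follows essentially the same approach as the paper's proof: both handle $f_{i,\infty}$ and $f_{i,end}$ via the middle expressions in \eqref{eq:finftyi}, \eqref{eq:fendi} using $\ord_p(x(P))\geq 0$, and for $f_{i,0}$ both argue that the $k$-th coefficient is known to precision $\pi(k)$, that division by $r(x(P))^k$ costs $k\epsilon$, and then take the minimum over $k$. You are slightly more thorough than the paper in making explicit the inequality $\min_k\{\pi(k)-k\epsilon\}\leq N-\epsilon<N$ (so the polynomial pieces never dominate), the convergence of the series via $\mu_k\sim k/p>k\epsilon$, and in flagging the subtlety that inverting the precision-$N$ quantity $r(x(P))$ could in principle cost more than the valuation shift; the paper simply asserts the $k\epsilon$ loss without comment.
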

\begin{proof} 
In this case $\ord_p(x(P)) \geq 0$, but $\ord_p(r(x(P))) = \epsilon$ with $0 < \epsilon < 1/p$.
Clearly there is still no loss of precision in evaluating the expressions in the middle
of~\eqref{eq:finftyi} and~\eqref{eq:fendi}. However for the $f_{i,0}$ there will be
loss of precision. After dropping the terms with valuation greater than or equal to~$N$
in~\eqref{eq:f0i}, the coefficient $c_{ijk}$ will be correct to precision $\pi(k)$ for all $k$. 
Dividing by $r(x(P))^k$ leads to the loss of $k \epsilon$ 
digits of precision, so the terms corresponding to $k$ will be correct to precision $\pi(k)-k\epsilon$.
Taking the minimum over $k$, we obtain the result. 
\end{proof}

\begin{prop} \label{prop:precinfinite}
Suppose that the functions $f_{i,0},f_{i,\infty},f_{i,end}$ are accurate to precision $N$. 
Moreover, let $e$ be a positive integer and let
$P \in X(\QQ_p(p^{1/e}))$ be an infinite point that is accurate to precision $N$.
Let $\epsilon=\ord_p(1/x(P))$. Then the computed values $f_i(P)$ are correct to precision
\[
N + \epsilon \min \{ \ord_{\infty}(W^{-1})+1, p (\ord_0(W)+1) \}. 
\]
\end{prop}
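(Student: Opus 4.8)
The plan is to run the argument of Propositions~\ref{prop:precgood} and~\ref{prop:precfinitebad}, treating the three pieces $f_{i,0}$, $f_{i,\infty}$, $f_{i,end}$ of $f_i$ separately, but now expanding everything in the integral basis $b^\infty$ adapted to the infinite residue disk. The situation is, in a sense, dual to the finite bad case: at an infinite point $P$ we have $\ord_p(1/x(P))=\epsilon>0$ and $\ord_p(1/r(x(P)))=\deg(r)\,\epsilon>0$ (the leading coefficient of $r$ being a $p$-adic unit under good reduction), so every negative power of $x$ and of $r(x)$ has strictly positive valuation; evaluating such expressions therefore cannot destroy precision and in fact improves it, which is why the conclusion is $N$ plus a positive quantity. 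Throughout, I use that $\ord_P(b^\infty_j)\geq 0$ and that the values $b^\infty_j(P)$ are $p$-adically integral, so multiplication by them is harmless.

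For $f_{i,\infty}$ and $f_{i,end}$ I would use the right-hand sides of \eqref{eq:finftyi} and \eqref{eq:fendi}, which are already written in the $b^\infty$ basis as $\sum_{j}\sum_{k=\kappa_2}^{\kappa_3}d_{ijk}x^k b^\infty_j$ and $\sum_{j}\sum_{k=\lambda_2}^{\lambda_3}d_{ijk}x^k b^\infty_j$, with all exponents $k\leq\kappa_3<0$ in the first and $k\leq\lambda_3<0$ in the second. Since the $d_{ijk}$ are accurate to precision $N$ and $\ord_p(x(P)^k)=-k\,\epsilon$, evaluating term by term shows that the computed value of $f_{i,\infty}$ is correct to precision at least $N+(-\kappa_3)\epsilon\geq N+\epsilon\min\{p(\ord_0(W)+1),\ord_\infty(W^{-1})+1\}$ by the bound on $\kappa_3$, and that of $f_{i,end}$ to precision at least $N+(-\lambda_3)\epsilon\geq N+\epsilon(\ord_0(W)+1)$ by the bound on $\lambda_3$, which one checks is at least the asserted minimum.

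For the remaining piece $f_{i,0}$, which \eqref{eq:f0i} presents only in the $b^0$ basis, I would substitute $b^0_j=\sum_i (W^{-1})_{ij}\,b^\infty_i$ to get $f_{i,0}=\sum_i\big(\sum_{j}\sum_{k\geq 1}\tfrac{c_{ijk}(x)}{r(x)^k}(W^{-1})_{ij}\big)b^\infty_i$. Here $\deg c_{ijk}<\deg r$ forces $\ord_\infty\!\big(c_{ijk}(x)/r(x)^k\big)\geq (k-1)\deg(r)+1\geq 1$, and multiplying by $(W^{-1})_{ij}$ drops the order at infinity by at most $-\ord_\infty(W^{-1})$; hence, at $P$, after propagating the error of $c_{ijk}$ (the only inexact ingredient, known to precision $N$ once terms of valuation $\geq N$ are discarded, exactly as in Proposition~\ref{prop:precfinitebad}), the $k$-th contribution to $f_{i,0}(P)$ is correct to precision at least $N+\big((k-1)\deg(r)+1+\ord_\infty(W^{-1})\big)\epsilon$. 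This is minimized at $k=1$, giving $N+(\ord_\infty(W^{-1})+1)\epsilon$. Taking the minimum of the three contributions gives the stated bound.

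The main obstacle I anticipate is the $f_{i,0}$ step: one must track the error of $c_{ijk}$ through the division by $r(x)^k$ and the change of basis $W^{-1}$ at the same time — here it is important to regard the coefficient of each $b^\infty_i$ as a single rational function in $x$, whose order at infinity controls the precision gain, since evaluating its factors separately can give a weaker bound — and then confirm that the worst case over $k$ is $k=1$. A secondary point is to verify that the $f_{i,end}$ contribution $N+(\ord_0(W)+1)\epsilon$ does not lower the stated bound; this is immediate when $\ord_0(W)+1\leq 0$, since then $p(\ord_0(W)+1)\leq\ord_0(W)+1$.
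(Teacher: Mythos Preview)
Your approach is essentially the paper's: treat $f_{i,0}$, $f_{i,\infty}$, $f_{i,end}$ separately, use the $b^{\infty}$-expansions on the right of \eqref{eq:finftyi}, \eqref{eq:fendi} for the last two, and pass through the change of basis $W^{-1}$ for $f_{i,0}$, arriving at the same three precision bounds $N+(\ord_{\infty}(W^{-1})+1)\epsilon$, $N+\epsilon\min\{p(\ord_0(W)+1),\ord_{\infty}(W^{-1})+1\}$, and $N+(\ord_0(W)+1)\epsilon$.

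There is, however, a systematic sign confusion in your narrative that you should fix. You assert that $\kappa_3<0$, $\lambda_3<0$, that only negative powers of $x$ appear, and that the conclusion is ``$N$ plus a positive quantity''. None of this is correct: one typically has $\ord_0(W)\leq -1$ and $\ord_{\infty}(W^{-1})\leq -1$ (in the first worked example of the paper both equal $-3$), so $\kappa_3$ and $\lambda_3$ are bounded by \emph{positive} numbers, the $b^{\infty}$-expansions of $f_{i,\infty}$ and $f_{i,end}$ involve \emph{positive} powers of $x$, and since $\ord_p(x(P))=-\epsilon<0$ these cause precision \emph{loss}. The quantity $\min\{\ord_{\infty}(W^{-1})+1,\,p(\ord_0(W)+1)\}$ is then negative, and the proposition is recording a loss, not a gain. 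Fortunately your inequalities (e.g.\ $N+(-\kappa_3)\epsilon\geq N+\epsilon\min\{\ldots\}$) are written so that they remain valid with the correct signs, so the argument survives; but the surrounding prose and the claim $\kappa_3<0$ are wrong and should be rewritten. Your acknowledged case split at the end (that the $f_{i,end}$ bound does not lower the minimum provided $\ord_0(W)+1\leq 0$) is exactly where this sign assumption is actually needed.
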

\begin{proof} 
In this case $\ord_p(x(P)) = - \epsilon < 0$ and $\ord_p(r(x(P))) = - \deg(r) \epsilon$. Let us first consider the $f_{i,0}$. 
Determining the $b^0_j(P)$ from the $b^{\infty}_j(P)$ in $\eqref{eq:f0i}$ leads to a precision loss of $-\ord_{\infty}(W^{-1}) \epsilon $.
However, since $\deg(c_{ijk}) < \deg(r)$, we have that
$$\ord_p \left( \frac{c_{ijk}(x(P))}{r(x(P))^k} \right) \geq \epsilon $$
for all $k\geq 1$. Therefore, we recover precision $\epsilon$ and the loss of precision will be at most $-(\ord_{\infty}(W^{-1})+1)\epsilon$. 
Evaluating the expressions on the right of~\eqref{eq:finftyi} and~\eqref{eq:fendi} leads to precision loss at most 
$$-\min\{p (\ord_0(W)+1) ,(\ord_{\infty}(W^{-1})+1)\} \epsilon$$
and 
$$-(\ord_0(W)+1) \epsilon,$$ respectively. The result follows easily from this.
\end{proof}

Now all that is left to analyze in Algorithm~\ref{alg:colemangoodi} is the precision loss from solving the linear system, 
i.e. computing the matrix $(\Phi-I)^{-1}$ and multiplying by it.

\begin{prop}
Suppose that the matrix $\Phi$ is $p$-adically integral and accurate to precision $N$. 
Moreover, let $e$ be a positive integer and let $P,Q \in X(\QQ_p(p^{1/e}))$ be points accurate to precision $N$. 
Suppose that the right hand side of (3) in Algorithm~\ref{alg:colemangoodi} is 
accurate to precision $N' \leq N$ according to 
Propositions~\ref{prop:prectiny},~\ref{prop:precgood},~\ref{prop:precfinitebad}, and~\ref{prop:precinfinite}.
Then the integrals $\int_{P}^Q \omega_i$ as computed in Algorithm~\ref{alg:colemangoodi} are correct to precision
\[
N' - \ord_p(\det(\Phi-I)). 
\]
\end{prop}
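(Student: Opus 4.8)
The plan is to make explicit what step (3) of Algorithm~\ref{alg:colemangoodi} computes: it solves the linear system $(\Phi - I)\,v = w$ for the unknown vector $v = \big(\int_P^Q \omega_1, \ldots, \int_P^Q \omega_{2g}\big)^{T}$, where $w$ is the right hand side vector, which is accurate to precision $N'$ by hypothesis. The matrix $\Phi - I$ is invertible because the eigenvalues of $\Phi$ have complex absolute value $p^{1/2} \neq 1$, so $d := \ord_p(\det(\Phi-I))$ is a well-defined nonnegative integer; it is nonnegative precisely because $\Phi$, and hence $\Phi - I$, is $p$-adically integral. Everything then reduces to tracking the precision loss in passing from the data $(\Phi, w)$ to $v = (\Phi-I)^{-1} w$.

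The key structural fact I would use is integrality of $\Phi - I$: by Cramer's rule, $(\Phi-I)^{-1} = \frac{1}{\det(\Phi-I)}\operatorname{adj}(\Phi-I)$ with $\operatorname{adj}(\Phi-I)$ again an integral matrix, and both $\det(\Phi-I) \in \ZZ_p$ and the entries of $\operatorname{adj}(\Phi-I)$ are polynomial expressions with integer coefficients in the entries of $\Phi$, hence known to precision $N$. I would then split the computation as $v = \operatorname{adj}(\Phi-I)\,w / \det(\Phi-I)$ and bound the two steps in turn. Multiplying the (integral) vector $w$ of precision $N' \le N$ by the integral, precision-$N$ matrix $\operatorname{adj}(\Phi-I)$ loses nothing: each component of $\operatorname{adj}(\Phi-I)\,w$ is a $\ZZ_p$-linear combination of the $w_j$ up to an error of valuation at least $\min\{N', N\} = N'$. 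Dividing by $\det(\Phi-I)$, whose valuation is exactly $d$, then costs at most $d$ further digits of absolute precision, and this yields the claimed bound $N' - d = N' - \ord_p(\det(\Phi-I))$.

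The delicate step is the last division, and the point is to avoid a lossy estimate. If one first inverted $\det(\Phi-I)$ and then multiplied, one would lose $2d$ digits --- since inversion doubles the valuation, as already used in the proof of Proposition~\ref{prop:prectiny} --- giving only $N' - 2d$. The sharper $N' - d$ comes from the fact that one \emph{divides} by $\det(\Phi-I)$ rather than inverting it: relative precisions combine by a minimum, so the quotient has relative precision at least the minimum of $N - d$ and the relative precision of $\operatorname{adj}(\Phi-I)\,w$; and since the true vector $v$ is $p$-adically integral (a vector of Coleman integrals of $p$-adically integral forms), the numerator satisfies $\ord_p(\operatorname{adj}(\Phi-I)\,w) = \ord_p(\det(\Phi-I)\cdot v) \ge d$, so $\operatorname{adj}(\Phi-I)\,w$ has relative precision at least $N' - d$; together with $N' \le N$ this forces the quotient, which has nonnegative valuation, to be accurate to precision at least $N' - d$. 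Writing out this relative-precision bookkeeping --- equivalently, expanding $(\det(\Phi-I))^{-1}$ as a geometric series in its error term and checking that each resulting term has valuation $\ge N' - d$ --- is the one genuinely careful ingredient; everything else follows formally from integrality of $\Phi - I$.
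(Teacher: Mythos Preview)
Your overall approach is the same as the paper's: both arguments come down to the observation that $(\Phi-I)^{-1}$ has $p$-adic valuation at least $-d$, where $d=\ord_p(\det(\Phi-I))$, which is exactly Cramer's rule, so that applying it to a vector known to precision $N'$ costs at most $d$ digits. The paper's entire proof is the two-line statement that $(\Phi-I)^{-1}$ has valuation $\geq -d$ and is correct to precision $N-d$; you have essentially unpacked this via the adjugate-over-determinant decomposition, which is fine.

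Where your write-up goes beyond the paper is in the discussion of why dividing by $\det(\Phi-I)$ loses only $d$ rather than $2d$ digits. You are right that there is a genuine subtlety here, but your resolution has a gap: you assert that the true solution vector $v=\bigl(\int_P^Q\omega_i\bigr)_i$ is $p$-adically integral because the $\omega_i$ are. This is not proved, and in the generality of the proposition it is not clear that it holds. The endpoints are allowed to lie in a ramified extension $\QQ_p(p^{1/e})$, where already tiny integrals of integral forms can carry denominators (see the terms $(i+1)/e-\lfloor\log_p(i+1)\rfloor$ in Proposition~\ref{prop:prectiny}); the forms $\omega_i$ for $i>g$ are only of the second kind, not holomorphic; and the right-hand side itself involves the values $f_i(P),f_i(Q)$, whose integrality is not guaranteed by the bound~\eqref{eq:cijk}. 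Your relative-precision bookkeeping is internally consistent, but it rests on this unjustified integrality input; without it the argument only yields $\min\{N'-d,\;N-2d+\ord_p(w)\}$. The paper does not take this detour at all---it simply asserts the precision of the inverse and leaves the finer accounting implicit---so if you want to keep your more detailed analysis, you should either supply a proof that $\ord_p(v)\geq 0$ (or the weaker $\ord_p(v)\geq N'-N$, which already suffices), or state the conclusion with the weaker bound.
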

\begin{proof}
This follows since $(\Phi-I)^{-1}$ has valuation at least $-\ord_p(\det(\Phi-I))$ and is 
correct to precision $N-\ord_p(\det(\Phi-I))$.
\end{proof}

\begin{rem}
If we do not assume that $\Phi$ is $p$-adically integral, then we can show
that the integrals $\int_{P}^Q \omega_i$ as computed in Algorithm~\ref{alg:colemangoodi} 
are correct to precision $$N' - \ord_p(\det(\Phi-I))-\delta$$ 
with $\delta$ defined as in~\cite[Definition 4.4]{tuitman:pc-general}.
\end{rem}

\begin{rem}
To analyze the loss of precision in Algorithm~\ref{alg:colemangood}, we proceed as follows. First, we use~\cite[Prop. 3.7, Prop. 3.8]{tuitman:pc-general} 
to determine the precision to which $f$ and the $c_i$ are correct. Then we proceed as in Propositions~\ref{prop:precgood},
~\ref{prop:precfinitebad}, and~\ref{prop:precinfinite} to determine the precision of the computed values of $f(P)$, $f(Q)$ and $\int_{P}^Q \omega_i$
for $i=1,\ldots,2g$. Finally, we determine the precision to which $\int_{P}^Q \omega$ is correct, taking into account the valuations of the $c_i$
as well. 
\end{rem}

\section{Complexity analysis and comparison with other algorithms}\label{sec:complexity}
In this section, we discuss the complexity of our algorithm and compare it to other approaches. We use the
$\SoftOh(-)$ notation that ignores logarithmic factors, i.e. $\SoftOh(f)$ denotes the class of functions that
lie in $\BigOh(f \log^k(f))$ for some $k \in \NN$. To be able to apply the complexity analysis 
from~\cite{tuitman:pc-general} we will need one more assumption from that paper:

\begin{assump}[{\cite[Assumption 2]{tuitman:pc-general}}]\label{tuitman2} \mbox{ }
Both $-\ord_P(W^0)$ and $-\ord_P(W^{\infty})$ are contained in $\BigOh(d_x d_y)$ for all $P \in \PP^1(\overline{\QQ})$.
\end{assump}

In~\cite{tuitman:pc-general}, it is explained why this is a reasonable assumption: for instance, standard algorithms for
computing integrals bases of function fields will yield matrices $W^0, W^{\infty}$ satisfying this condition.

\subsection{Complexity analysis}\label{subsec:complexity}\mbox{} \\

\begin{prop} \label{prop:complexityfrob}
Let notation and assumptions be as introduced in Section~\ref{sec:padiccoh}. The matrix $\Phi$ and the functions $f_1, \ldots, f_{2g}$ 
from Definition~\ref{defn:frobdecomp} can be computed to $p$-adic precision~$N$ using Algorithm~\ref{alg:pcc} in time 
$\SoftOh(p d_x^4 d_y^2 (N^2 + d_x d_y N))$. 
\end{prop}

\begin{proof}
Take the maximum over the four steps in~\cite[Section 4]{tuitman:pc-general}, leaving~$N$ instead of replacing it with
$\BigOh(d_x d_y)$. Note that technically, here we have to replace~$N$ by the working precision of the algorithm
from \cite{tuitman:pc-general}, necessary to obtain the matrix $\Phi$ to precision $N$. However, by the argument 
from~\cite[Proposition 4.9]{tuitman:pc-general}, this working precision can be chosen to be $N + \BigOh(\log{(d_x d_y)})$, 
yielding the same expression for the complexity since $\log(d_x)$ and $\log(d_y)$ are absorbed by the $\SoftOh$ symbol.
\end{proof}
In what follows, we will restrict to the (generic) case of integrals between good points,
only making a few remarks about integrals between bad points. First, we consider tiny integrals between
good points.

\begin{prop} \label{prop:complexitytinyint}
Let $P,Q \in X(\QQ_p)$ be good points that lie in the same residue disk and $\omega$ an element of our basis $[\omega_1,\ldots,\omega_{2g}]$. Then
Algorithm~\ref{alg:tiny} will compute $\int_{P}^Q \omega$ to $p$-adic precision~$N$ in time $\SoftOh(\log(p) d_x^2 d_y N^2)$.
\end{prop}
\begin{proof} 
Since $P$ is a good point, it can be written as $P = (x_0,y_0)$ with $x_0,y_0 \in \ZZ_p$, and we can take the local
coordinate at $P$ to be $t = x-x_0$. Suppose that we use $t$-adic precision $l$ in our computations, where $l$ will
be determined later. We need to expand $\omega$ as a power series in~$t$ using $t$-adic Hensel lifting in the 
ring $A = (\ZZ/p^N\ZZ)[t]/(t^l)$. Note that a single operation in $A$ takes time $\SoftOh(\log(p) N l)$. 

From the equation $Q(t+x_0,y(t)) = 0,$ which can be 
computed in $\BigOh(d_x d_y)$ operations in $A$ and has degree $d_x$ in $y$, we can compute $y(t)$ by Hensel lifting the 
solution $y_0$ modulo~$t$ in $\BigOh(d_x \log(l))$ operations in $A$. Computing the power series expansion of $1/r(x)$ in $t$ is similar but easier. 
By~\cite[Section 4.1]{tuitman:pc-general}, we have that 
$$\omega = g(x,y) \frac{dx}{r(x)}$$ 
where $g(x,y) \in \ZZ_p[x,y]$ is of degree at most $d_x-1$ in $y$ and degree $\BigOh(d_x d_y)$ in $x$. Therefore, $\omega(t)$ 
can be computed in $\BigOh(d_x^2 d_y \log(l))$ operations in~$A$, i.e. in time $\SoftOh(d_x^2 d_y N l)$. 

The actual integration and evaluation at the endpoints can be done naively in time $\SoftOh(\log(p) N l)$. 

Finally, by Remark~\ref{rem:l}, we should take $l$ minimal such that
\[
l+1 - \lfloor \log_p(l+1) \rfloor \geq N.
\]
Therefore $l$ is $\BigOh(N)$ and the proposition follows.
\end{proof}

Now we consider general integrals between good points.

\begin{prop}
Let $P,Q \in X(\QQ_p)$ be good points and $\omega$ an element of our basis $[\omega_1,\ldots,\omega_{2g}]$. 
Suppose that $\Phi$ is $p$-adically integral and $\ord_p(\det(\Phi-I)) = m$. Then Algorithm~\ref{alg:colemangoodi}  
will compute $\int_{P}^Q \omega$ to $p$-adic precision~$N-m$ in time $\SoftOh(p d_x^4 d_y^2 (N^2 + d_x d_y N))$.
\end{prop}

\begin{proof}
By Proposition~\ref{prop:complexityfrob}, the matrix $\Phi$ and the functions $f_1, \ldots, f_{2g}$ can be computed
to $p$-adic precision~$N$ in time $\SoftOh(p d_x^4 d_y^2 (N^2 + d_x d_y N))$.

Since $P$ is a good point, it can be written as $P = (x_0,y_0)$ with $x_0,y_0 \in \ZZ_p$. Note that 
$\Frob_p(P) = (x_0^p,y_p)$, where $y_p \in \ZZ_p$ can be obtained by Hensel lifting the solution 
$y_0^p$ modulo~$p$ to the equation $Q(x_0^p,y) = 0$. This can be done in $\BigOh(d_x d_y \log(N))$ 
operations in $\ZZ_p$, i.e. in time $\SoftOh(\log(p) d_x d_y \log^2(N))$. The complexity of computing
$\Frob_p(Q)$ is the same.

The tiny integrals $\int_P^{\Frob_p(P)} \omega_i$ and $\int_{\Frob_p(Q)}^{Q} \omega_i$ can 
be computed in time $\SoftOh(\log(p) d_x^2 d_y N^2)$ for a single value of $i$ by Proposition~\ref{prop:complexitytinyint}. 
Since $g$ is $\BigOh(d_x d_y)$ by~\cite[Proposition~4.1]{tuitman:pc-general}, we can do this for all $1 \leq i \leq 2g$ in time 
$\SoftOh(\log(p) d_x^3 d_y^2 N^2)$.

The functions $f_i$ have $\SoftOh(p d_x d_y N)$ terms, so can be evaluated at the points $P$ and $Q$ for all $1 \leq i \leq 2g$ 
in time $\SoftOh(p d_x^2 d_y^2 N^2)$. 

Finally, the $2g \times 2g$ linear system can be solved (naively) in time $\SoftOh(\log(p) d_x^3 d_y^3 N)$ and the proposition
follows.
\end{proof}

The input size of our algorithm is naturally determined by $d_x,d_y,N$ and $\log(p)$. Note that the complexity 
bounds above are polynomial in $d_x,d_y$ and $N$, but exponential in $\log(p)$. This is a typical feature of algorithms using $p$-adic 
cohomology, so should not come as a surprise. Actually, for integrals involving a finite bad point, the dependence of the 
complexity on~$p$ will even get a bit worse. By Remark~\ref{rem:nearbdprec}, we will have to compute in an extension of $\QQ_p$ 
of degree at least~$p$, which will worsen the dependence of the complexity on~$p$ from (quasi)linear to (quasi)quadratic. (Note that this does not happen at infinite points, so it might be useful to transform the curve so that a bad point of interest is 
moved to infinity, but we have not yet tried this.)

\subsection{Comparison with other algorithms}\label{subsec:comparison}\mbox{} \\

Another approach that has often been used to compute $\int_{P}^Q \omega$ is as follows. Let $J$ denote the Jacobian of $X$.  
First find some integer $k$ such that (the reduction mod~$p$ of) the point $k(P-Q)$ is trivial in $J(\FF_p)$. Note that for this~$k$, one would 
usually take the order of $J(\FF_p)$. After computing $k(P-Q)$ as an element (in the residue disk at~$0$) of $J(\QQ_p)$, one is reduced 
to computing a tiny integral over a divisor representing this point and dividing by~$k$. Here we discuss how this approach compares to ours.

Currently, implementations of algorithms to compute in $J(\QQ_p)$ are restricted to very special curves, e.g. hyperelliptic ones.
Indeed, for non-hyperelliptic curves of genus 4 or larger, there does not seem to be a readily available implementation of divisor arithmetic over $\QQ_p$.
In some cases, this can be circumvented by computing in $J(\QQ)$ instead, which then suffers from coefficient swell. However, 
even if the Jacobian arithmetic over $\QQ_p$ is not a problem, in general one has to compute the order of $J(\FF_p)$ first. Suppose 
that $p,N$ are small but $d_x$ or $d_y$ are large. The fastest known way to compute the order of $J(\FF_p)$
is then to compute the zeta function of $X \otimes \FF_p$ using the algorithm from~\cite{tuitman:pc-general} and evaluate its
numerator at~$1$. However, the complexity of that algorithm is that of our current algorithm with~$N$ of the order $\SoftOh(d_x d_y)$.
In other words, for $p,N$ fixed our algorithm computes Coleman integrals in time $\SoftOh(d_x^5 d_y^3)$, while the best known
algorithm for computing the order of $J(\FF_p)$ already takes time $\SoftOh(d_x^6 d_y^4)$.  In Section \ref{ex:challenge2}, we consider an example with large $d_x$ and $d_y$ and present some timings. 

As we will illustrate in the next section, the main strength of our algorithm is the range of examples it can routinely
handle.
\section{Examples}\label{sec:examples}
\subsection{An example from the work of Bruin--Poonen--Stoll} \mbox{ } \\

Let $X/\QQ$ be the genus 3 curve given by the following plane model: $$Q(x,y) = y^3 + (-x^2 - 1)y^2 - x^3y + x^3 + 2x^2 + x = 0.$$  Bruin, Poonen, and 
Stoll~\cite[Prop. 12.17]{bruin-poonen-stoll} show that, under the assumption of the Generalized Riemann Hypothesis, the Jacobian of $X$ has Mordell-Weil 
rank~1 over $\QQ$. (Note that our working plane model is given by taking the equation in  \cite[\S 12.9.2]{bruin-poonen-stoll}, provided by D. Simon, 
and setting $x:=1, z:=x$.)

We have $W^0 = I$, which means that $b^0 = [1,y,y^2]$  is an integral basis 
for the function field of $X$ over $\QQ[x]$. Moreover, we have
$$W^{\infty}=
\begin{pmatrix}
1 & 0 & 0 \\
0 & 1/x^2 & 0 \\
0 & -1/x, & 1/x^3
\end{pmatrix}
,$$
so that $b^{\infty}=[1,y/x^2,-y/x+y^2/x^3]$ is an integral basis 
for the function field of $X$ over $\QQ[1/x]$.

We consider the following points on $X$ : $P_1 = (0,0), P_2 = (0,1), P_3 = (-3,4), P_4 = (-1,0), P_5 = (-1,1)$, as well as three very infinite  points: 
$P_6$ with $b^{\infty}$-values $[1,0,1]$, $P_7$ with $b^{\infty}$-values $[1,1,1]$, and $P_8$ 
with $b^{\infty}$-values $[1,0,0]$.

In \cite[Prop. 12.17]{bruin-poonen-stoll}, the authors compute $X(\QQ)$ by using the fact that $[(P_3)-(P_2)]$ is of infinite order in $J(\QQ)$ and running a 3-adic Chabauty--Coleman argument.  
In particular, by computing $3$-adic tiny integrals between $P_2$ and $P_3$, they produce a two-dimensional subspace of regular $1$-forms 
annihilating rational points on $X$ and use the Coleman integrals of these differentials to show that these eight points are all of the 
rational points on $X$. 

Here we show how to produce a basis for the two-dimensional space of annihilating $1$-forms without immediately appealing to tiny integrals. While it is desirable to use tiny integrals 
whenever possible, some curves do not readily admit points of infinite order in $J(\QQ)$ that are given as small integral combinations of known rational points that 
allow a tiny integral computation. Consequently, in such a scenario, some arithmetic in the Jacobian would be needed to reduce the necessary 
Coleman integral computation to a tiny integral computation, as discussed in Section \ref{subsec:comparison}.  The computation below shows how one might bypass 
the Jacobian arithmetic by using Coleman integrals that are not necessarily tiny integrals. 

We have $r =  x(x+1)(x^8 + 7x^7 + 21x^6 + 31x^5 + 3x^4 - 51x^3 - 69x^2 - 23x + 4)$. Taking $p = 3$ makes all eight points  various types of bad:
\begin{center}
\begin{tabular}{| l |  c  | c  |c | }
\hline
Point $P$ & $r(x(P))$ & Type of point  \\ 
\hline
$P_1 = (0,0)$  & 0      & finite very bad\\
$P_2 = (0,1)$  & 0      & finite very bad\\
$P_3 = (-3,4)$ & $-600$ & finite bad\\
$P_4 = (-1,0)$ & 0      & finite very bad\\
$P_5 = (-1,1)$ & 0      & finite very bad \\
$1/x(P_6) = 0, b^{\infty} = [1,0,1] $ & $\infty$ & very infinite\\
$1/x(P_7) = 0, b^{\infty} = [1,1,1] $ & $\infty$ & very infinite \\
$1/x(P_8) = 0, b^{\infty} = [1,0,0] $ & $\infty$ & very infinite \\
\hline
\end{tabular}
\end{center}

We compute the $3$-adic Coleman integrals on a basis of $\Hrig^1(X \otimes \QQ_p)$, in particular, the regular $1$-forms 
are given by
\begin{align*}
\omega_1 &=  (b^0 \cdot (-8x^8 - 8x^7 + 86x^6 + 192x^5 + 118x^4 + 12x,\\
&\qquad\qquad -31x^7 - 98x^6 - 75x^5 + 70x^4 + 183x^3 + 234x^2 + 83x - 12, \\
&\qquad\qquad 31x^5 + 60x^4 - 52x^3 - 246x^2 - 119x + 12))\frac{dx}{r}, \\
\omega_2  &= (b^0 \cdot (2x^8 - 4x^7 - 56x^6 - 120x^5 - 76x^4 + 6x^2, \\
&\qquad\qquad13x^7 + 44x^6 + 45x^5 - 22x^4 - 81x^3 - 144x^2 - 77x + 12,\\
&\qquad\qquad -13x^5 - 24x^4 +  28x^3 + 138x^2 + 77x - 12))\frac{dx}{r},\\
\omega_3 &=  (b^0 \cdot (4x^7 + 22x^6 + 44x^5 + 30x^4 + 4x^3, \\
&\qquad\qquad  -3x^7 - 10x^6 - 11x^5 + 6x^4 + 19x^3 + 42x^2 + 27x - 4,\\
&\qquad\qquad  3x^5 + 4x^4 - 12x^3 - 46x^2 - 27x + 4))\frac{dx}{r},\end{align*}
producing the following values:
\begin{align*}
\int_{P_1}^{P_2} \omega_1 &= 2 \cdot 3^{2} + 3^{3} + 2 \cdot 3^{5} + 3^{6} + 2 \cdot 3^{7} + 3^{8} + O(3^{9}), \\
\int_{P_1}^{P_2} \omega_2 &= 3^{3} + 3^{4} + 2 \cdot 3^{5} + 2 \cdot 3^{6} + 3^{7} + O(3^{9}),\\
\int_{P_1}^{P_2} \omega_3 &= 3 + 2 \cdot 3^{2} + 3^{3} + 3^{4} + 3^{5} + O(3^{9}).
\end{align*}

We use the values of these three integrals (i.e., by computing the kernel of the associated $3 \times 1$ matrix) to compute that the two differentials
\begin{align*}
\xi_1 &= (1 + O(3^9))\omega_1 + O(3^9)\omega_2 + (430\cdot3 + O(3^9))\omega_3 \\
\xi_2 &=  O(3^9)\omega_1 + (1 + O(3^9))\omega_2 + (569\cdot3^2 + O(3^9))\omega_3
\end{align*}
give a basis for the regular $1$-forms annihilating rational points. Indeed, we can numerically see that the values of the two integrals 
$\int_{P_1}^P \xi_1, \int_{P_1}^P \xi_2$ vanish for all $P = P_3, P_4, \ldots, P_8$.  The code for this example can be found in the file~\texttt{./examples/bps.m} in~\cite{colemangit}.

\subsection{The modular curve $X_0(44)$} \mbox{ } \\

We consider the genus $4$ curve $X = X_0(44)$. We work with the plane model found by Yang \cite{yang}: $$Q(x,y) =y^5+12x^2y^3-14x^2y^2+(13x^4+6x^2)y-(11x^6+6x^4+x^2) = 0.$$ 
We have 
\[W^0=
\begin{bmatrix}
1 & 0  &  0 & 0  & 0 \\
0 & 1 &  0 & 0 & 0 \\
0 & 0 & 1 & 0 & 0 \\
0 & 0 & 0 & \frac{1}{x} & 0 \\
\frac{-10x^3}{x^4 + 6x^2 + 1} &  \frac{-6x^3 - 13x}{x^4 + 6x^2 + 1} &  \frac{x^3 + 12x}{x^4 + 6x^2 + 1} &  \frac{-x}{x^4 + 6x^2 + 1} & \frac{1}{x^5 + 6x^3 + x} \end{bmatrix}.\]

\noindent Indeed, this plane model is singular, as we see $W^0 \neq I$.  We have $$r = x(x^4 + 6x^2 + 1)(45753125x^8 + 8440476x^6 + 1340814x^4 + 69756x^2 + 3125)$$
and
$$b^0 = \left[1, y, y^2, \frac{y^3}{x}, \frac{-10x^4-(6x^4-13x^2)y + (x^4+12x^2)y^2 - x^2y^3+1}{x^5 + 6x^3 +x}\right].$$
We have that $(Q,W^0, W^{\infty})$ has good reduction at $p=7$.
Let $P_1$ be the (good) point $(1,1)$ and consider the unique point $P_2$ on the smooth model which lies over the singularity $x=0,y=0$ of the plane model.
At it turns out, at $P_2$ we have that $y^3/x$ is a local coordinate and the values of the $b^0$ are $[1,0,0,0,0]$.  
Computing $7$-adic integrals gives 
$$\left(\int_{P_1}^{P_2} \omega_1, \int_{P_1}^{P_2} \omega_2, \int_{P_1}^{P_2} \omega_3, \int_{P_1}^{P_2} \omega_4\right) = (O(7^9),O(7^9),O(7^9),O(7^9)),$$ 

\noindent which seems to suggest that $[(P_2)-(P_1)]$ is a torsion point in the Jacobian of $X$. A computation in Magma verifies that $15[(P_2)-(P_1)] = 0.$
The code for this example can be found in the file \texttt{./examples/X0\_44.m} in \cite{colemangit}.

\subsection{A superelliptic genus 4 curve} \mbox{ } \\

We consider the superelliptic genus $4$ curve $X/\QQ$ given by the plane model $$Q(x,y) = y^3 - (x^5 - 2x^4 - 2x^3 - 2x^2 - 3x) = 0.$$ 
Using the \texttt{Magma} intrinsic \texttt{RankBounds}, which is based on~\cite{poonenschaefer} and implemented by Creutz, we find that the Mordell-Weil rank of its Jacobian is $1$. A 
search yields the rational points 
$$P_1=(1,-2), P_2=(0,0), P_3=(-1,0), P_4=(3,0), P_5=\infty.$$
We have $b^0=[1,y,y^2]$ and $r=x^5 - 2x^4 - 2x^3 - 2x^2 - 3x$. A basis for the regular $1$-forms on $X$ is given by 
\[
\omega_1 = \frac{y dx}{r}, \; \; \; \omega_2 = \frac{xy dx}{r}, \;\;\; \omega_3 = \frac{x^2y dx}{r}, \;\;\; \omega_4=\frac{y^2 dx}{r}. 
\]
Now we take $p=7$ and compute
\[
\int_{P_1}^{P_2} \omega_1 =  12586493\cdot7 + O(7^{10}).
\]
Since this integral does not vanish, $[(P_2) - (P_1)]$ is non-torsion in the Jacobian. 

The space of annihilating regular $1$-forms is $3$-dimensional, and a basis is given by
\begin{align*}
\xi_1 &= (1 + O(7^{10})) \omega_1 + O(7^{10}) \omega_2 + O(7^{10}) \omega_3 - (139167240 + O(7^{10})) \omega_4 \\
\xi_2 &= O(7^{10}) \omega_1 + (1 + O(7^{10}))\omega_2 + O(7^{10}) \omega_3 + (93159229 + O(7^{10})) \omega_4\\
\xi_3 &= O(7^{10}) \omega_1 + O(7^{10}) \omega_2 + (1 + O(7^{10})) \omega_3 + (8834289 + O(7^{10})) \omega_4.
\end{align*}
Indeed, we can numerically see that the values of the $3$ integrals 
$\int_{P_1}^P \xi_1, \int_{P_1}^P \xi_2, \int_{P_1}^P \xi_3$ vanish for $P = P_3, P_4,P_5$.  
The code for this example can be found in the file \texttt{./examples/C35.m} in \cite{colemangit}. 

\subsection{A curve of genus 55}\label{ex:challenge2}
As discussed in Section \ref{subsec:comparison}, to compute Coleman integrals using the other leading approach, there are challenges to working in the Jacobian of the curve in the case of large genus. Here we present some timings indicating the feasibility of our algorithm. The computations in this subsection were carried out on a single core of a 28-core 2.2 GHz Intel Xeon server with 256GB RAM.

Here  we consider the genus 55 curve $X$ with plane model given by $Q(x,y)= 0$ below:
\small{\begin{align*}
Q(x,y)&= x^{11}y - x^7y^5 - x^6y^6 - x^4y^8 + xy^{11} + y^{12} + x^{11} - x^{10}y + x^8y^3 - x^6y^5 + x^5y^6 + x^3y^8 - x^2y^9 - xy^{10} +\\
&\quad y^{11} + x^{10} + x^9y - x^8y^2 + x^7y^3 + x^6y^4 + x^5y^5 - x^4y^6 + xy^9 +y^{10} - x^9 + x^8y+ x^7y^2 + x^6y^3 + x^5y^4 + \\
&\quad  x^4y^5 + x^3y^6 - x^2y^7 + y^9 + x^8 - x^7y + x^6y^2 - x^5y^3 + xy^7 + y^8 + x^7 + x^6y +x^5y^2 - x^2y^5 - xy^6 +  \\
&\quad  y^7 - x^6 - x^4y^2 - x^2y^4 + xy^5 - x^5 + x^3y^2 - x^2y^3 + y^5 - x^4 + x^3y + x^2y^2 + xy^3 + y^4 - x^2y - xy^2 + \\
&\quad y^3 - x^2 - xy + x + y.\end{align*}}
\normalsize
This example was constructed using the \texttt{Magma} intrinsic \texttt{RandomPlaneCurve}, with the call
\begin{verbatim}
> P<x,y,z>:=ProjectiveSpace(Rationals(),2);
> RandomPlaneCurve(12,[0],P:RandomBound:=1);
\end{verbatim}
producing  a smooth plane curve of degree 12 and coefficients randomly selected from $\{-1,0,1\}$. We generated a number of such curves and considered a selection that had at least 3 rational points. We present one illustrative example here.

Let $p=7$ and consider $P_1 = (0,0)$ and $P_2 = (1,0)$, which are each good points on $X$. We compute the Coleman integrals $\left\{\int_{P_1}^{P_2} \omega_i\right\}_{i = 1}^{110}$ for the basis $\{\omega_i\}$ of $H^1_{\textrm{rig}}(X\otimes \QQ_p)$ constructed as in Definition \ref{def:basis} with $N = 5$ as our precision. We find that $$\int_{P_1}^{P_2} \omega_1 = 5 \cdot 7 + O(7^2),$$ and we deduce that the Jacobian of $X$ has positive rank.

The computation of \texttt{coleman\_data} took 79685 s, after which the call to \texttt{coleman\_integrals\_on\_basis} took 39 s.
   
The code for this example can be found in the file~\texttt{./examples/g55.m} in~\cite{colemangit}.

Further examples illustrating how to call and use the code are available in the file \texttt{examples.pdf} in \cite{colemangit}.

\section*{Acknowledgements}
We would like
to thank Amnon Besser, Netan Dogra, Alan Lauder, Steffen M\"{u}ller,
and Floris Vermeulen for helpful discussions, as well as the anonymous
referees for their valuable comments on earlier versions of this
manuscript.
Balakrishnan is supported in part by NSF grant DMS-1702196, the Clare Boothe Luce Professorship (Henry Luce Foundation), and Simons Foundation grant \#550023.
Tuitman is a Postdoctoral Researcher of the Fund for Scientific Research FWO - Vlaanderen. \\

\bibliographystyle{amsalpha} 
\providecommand{\bysame}{\leavevmode\hbox to3em{\hrulefill}\thinspace}
\providecommand{\MR}{\relax\ifhmode\unskip\space\fi MR }
\providecommand{\MRhref}[2]{%
  \href{http://www.ams.org/mathscinet-getitem?mr=#1}{#2}
}
\providecommand{\href}[2]{#2}

\end{document}